\newcommand\numberthis{\addtocounter{equation}{1}\tag{\theequation}}
\DeclareMathOperator{\re}{\mathbb{R}e}
\DeclareMathOperator{\im}{\mathbb{I}m}
\newcommand{\llnorm}[1]{|||#1|||}
\newcommand{\norm}[1]{\left\|#1\right\|}
\newcommand{\lnorm}[1]{ \left\| #1 \right\|}
\newcommand{\spr}[2]{\langle #1,#2 \rangle}
\renewcommand{\i}{\mathrm i}
\newcommand{\INF}{{\infty}}
\newcommand{\dist}{\mbox{dist}}
\newcommand{\tta}{\theta}
\newcommand{\OM}{\Omega}
\newcommand{\sph}{{{\mathbb S}^ 1}}
\newcommand{\del}{\partial}
\newcommand{\Gam}{\varGamma}
\newcommand{\ol}{\overline}
\newcommand{\ds}{\displaystyle}
\newcommand{\dba}{\overline{\partial}}
\newcommand{\BR}{\mathbb{R}}
\newcommand{\BC}{\mathbb{C}}
\newcommand{\BZ}{\mathbb{Z}}
\newcommand{\BN}{\mathbb{N}}
\newcommand{\bu}{{\bf u}}
\newcommand{\bv}{{\bf v}}
\newcommand{\bw}{{\bf w}}
\newcommand{\bc}{{\bf c}}
\newcommand{\ba}{{\bf a}}
\newcommand{\bg}{{\bf g}}
\newcommand{\bbf}{{\bf f}}
\newcommand{\bF}{{\bf F}}
\newcommand{\bzero}{\mathbf 0}
\newcommand{\btheta}{\boldsymbol \theta}
\newcommand{\balpha}{\boldsymbol \alpha}
\newcommand{\bbeta}{\boldsymbol \beta}
\newcommand{\B}{\mathcal{B}}
\newcommand{\BT}{\mathcal{T}}
\newtheorem{theorem}{Theorem}[section]
\newtheorem{prop}{Proposition}[section]
\newtheorem{lemma}{Lemma}[section]
\newtheorem{cor}{Corollary}[section]
\newtheorem{example}{Example}
\numberwithin{equation}{section}
\title[Inversion of the momenta ray transform ]
{Inversion of the attenuated momenta ray transform of planar symmetric tensors}
\begin{document}
	\date{\today}
\author{Hiroshi Fujiwara}	\address{Graduate School of Informatics,  Kyoto University, Yoshida Honmachi, Sakyo-ku, Kyoto 606-8501, Japan }	\email{fujiwara@acs.i.kyoto-u.ac.jp}
	
	\author{David Omogbhe}
	\address{Faculty of Mathematics, Computational Science Center, University of Vienna, Oskar-Morgenstern-Platz 1, 1090 Vienna, Austria}
	\email{david.omogbhe@univie.ac.at}
	\author{Kamran Sadiq}
	\address{Johann Radon Institute for Computational and Applied Mathematics (RICAM), Altenbergerstrasse 69, 4040 Linz, Austria}
	\email{kamran.sadiq@ricam.oeaw.ac.at} 
		
	\author{Alexandru Tamasan}
	\address{Department of Mathematics, University of Central Florida, Orlando, 32816 Florida, USA}
		\email{tamasan@math.ucf.edu}

	\subjclass[2020]{Primary: 44A12, 35J56; Secondary: 45E05}
		
	\keywords{$X$-ray transform,  ray transform of symmetric tensors,  $k$-momentum ray transform,  momenta ray transform, $A$-analytic maps} 
	\maketitle
	\begin{abstract}
		We present a reconstruction method that stably recovers the real valued, symmetric tensors  compactly supported in the Euclidean plane, from knowledge of their attenuated momenta ray transform.  The problem is recast as an inverse boundary value problem for a system of transport equations, which we solve by an extension of Bukhgeim's $A$-analytic theory. The method of proof is constructive. To illustrate the reconstruction method, we present results obtained in the numerical implementation for the non-attenuated case of 1-tensors. This new version now includes the results of  the preprint arXiv: 2307.10758.
	\end{abstract}

	\section{Introduction} \label{sec:intro}

	We consider the problem of recovering a real valued, symmetric $m$-tensor field $\bbf$ compactly supported in the plane, $m\geq 1$, from knowledge of its $0,1,...,m$-th  attenuated  moment ray transforms. This problem is motivated by some engineering applications: for $m=1$ in Doppler tomography \cite{norton88,braunHauk,sparSLP95},
and Magneto-acousto-electrical tomography \cite{haideretal,kunyanskyetal23},  for $m=2$ in inverse kinematic problems in isotropic elastic media \cite{abbeyetal,hendriksetal} and for $m=4$ in anisotropic media  \cite{aben79, sharafutdinov_book94}. The non-attenuated case also arises in the linearization of the boundary rigidity problem \cite{sharafutdinov_book94,stefanovUhlmann98, stefanovUhlmannetal}.

	When the data is limited to the $0$-moment, the (non-attenuated) ray transform has a large kernel containing all the potential tensors $d \bg$ with $\bg$ vanishing at the boundary of the support, and a vast literature in tensor tomography concerns the recovery of the solenoidal part of the field, see \cite{sharafutdinov_book94, paternainSaloUhlmann14,  palamodov09, schuster08, holmanStefanov} 
	and references therein.

In order to recover the entire tensor, three types of additional data have been proposed: 
some longitudinal ray data as in \cite{sharafutdinov_book94}, 
some transverse  data as in
\cite{desaiLionheart16,louis, kunyanskyetal23}, or a mixture of the two types
\cite{derevtsov23}.
This work concerns the longitudinal data case: In \cite{sharafutdinov_book94} it is shown that the entire field is uniquely determined from the combined $k^{th}$-moment ray transform for $0\leq k\leq m$; for brevity we call it the {momenta} ray transform. Inversion of the momenta ray transform has been the subject of recent research interests: in the Euclidean setting some inversion formulas were given in \cite{derevtsovSvetov15,derevtsovetal21}, with reconstruction for the $m=1$ case in \cite{andersson, kunyanskyetal23}, and the recent sharp stability estimates in \cite{denisiuk}. In the non-Euclidean setting
the unique determination result was shown for simple real analytic Riemannian manifolds in \cite{abhishekMishra}, extended to simple Riemannian surface in \cite{venkeMishraMonard19}, 
with inversion for $m=1$ and sources on a curve in \cite{mishra}, and stability estimates in \cite{sharafutdinov17,venkeetal19}. Since we also consider the attenuated case, it should be mentioned that the $0$-moment  attenuated Doppler transform in the Euclidean plane
is known to uniquely determine the entire field in subdomains  where the attenuation is positive \cite{bal04, kazantsevBukhgeimJr07, tamasan07}; note, however, 
that those methods become unstable for small attenuation.

Different from the above referenced works, herein we give a reconstruction method that recovers the entire $\bbf$  without appealing to the Helmholtz decomposition; in bounded domains this is an advantage that avoids the ambiguity of a harmonic potential. 
Our approach considers a new inverse source problem for a weakly coupled system of transport equations, which we solve by an extension of Bukhgeim's theory of $A$-analyticity \cite{bukhgeimBook}. Stability estimates for the reconstruction require new higher order a priori estimates for solutions of the Bukhgeim-Beltrami equation. Of independent interest, these estimates introduce a new analytical tool; see Section \ref{sec:estimate_BBE}.

In the end we present the results obtained by applying the reconstruction method to three numerical examples in the  $m=1$ (Doppler) case.
 The analysis of the numerical algorithms involved  is subject to a separate discussion.

\section{Statement of the main result}\label{sec:Result}
In this section we introduce notation and state our main result. Let $\Omega$ be  the unit disc with boundary $\Gam$. For $m\geq 1$ fixed integer, let $\bbf= (f_{i_1i_2...i_m})$  be a real valued symmetric $m$-tensor supported in $\ol\OM$. Furthermore, for $s\geq 1$, we assume that $\bbf$ has components in the Sobolev space of functions of square integrable derivatives which,  up to order $s$, vanish at the boundary. We denote the space of such tensors by 
$\ds 
H^{s}_0(\mathbf{S}^m; \OM) =\left \{\bbf =(f_{i_1 \cdots i_m}) \in \mathbf{S}^m( \OM): f_{i_1 \cdots i_m}\in H^{s}_0(\OM)\right \}.
$
The symmetry refers to $f_{i_1i_2...i_m}$ being invariant under any transposition of the indexes $i_1,...,i_m\in\{1,2\}$. 

With the summation convention understood over repeated indexes, 
for $(x,\btheta)\in\BR^2\times \sph$ we denote by $\langle \bbf(x), \btheta^m \rangle = f_{i_1 \cdots i_m} (x) \theta^{i_1} \cdot \theta^{i_2} \cdots \theta^{i_m}\ds $
the action of $\bbf$ on $\ds \underbrace{ \btheta  \otimes \btheta \otimes \cdots \otimes \btheta}_m$.

As in \cite{denisiuk} (defined for $a=0$), in here we work with the  $k^{th}$-moment attenuated ray transform 
\begin{align}\label{Iaf} 
I_a^k\bbf(x,\btheta) &:=\int_{-\infty}^{\infty} t^k 
e^{-\int_t^\infty a( \Pi_{\btheta}(x)+s\btheta)ds} \langle \bbf( \Pi_{\btheta}(x)+t\btheta), \btheta^m \rangle dt, \quad  0\leq k \leq m, \end{align}
where $\Pi_{\btheta}(x)=x- (x\cdot\btheta)\btheta$ is the projection of $x$ onto $\btheta^\perp$.  Both the tensor $\bbf$ and the function $a$ in \eqref{Iaf} 
are assumed extended by zero outside $\OM$. The function $a$ in \eqref{Iaf} models an attenuation. In the non-attenuated case ($a= 0$), we use the notation $I^k\bbf := I_0^k\bbf$.
	Note that in the non- attenuated case the definition \eqref{Iaf} is slightly different than the original definition in \cite{sharafutdinov_book94}: therein for $k \geq 1$ the  $k^{th}$ moment ray transform is not constant along the lines (the derivative at $x$ in the direction of $\btheta$  does not vanish).

The following elementary result (see the Appendix \ref{sec:elementary_results} for a proof) reduces the inversion of the momenta ray transform to an inverse boundary value problem for a system of transport equations. Let $\Gam_\pm:=\{(x,\btheta)\in \del \OM \times\sph:\, \pm\nu(x)\cdot\btheta>0 \}$ be the incoming (-), respectively outgoing (+), unit tangent sub-bundles of the boundary; where  $\nu(x)$ is the outer unit normal at $x \in \del \OM$.
\begin{prop}\label{prop_transEq} Let $s\geq 0$ and $ m \geq 1$ be arbitrarily fixed, and let $\bbf \in H^{s}_0(\mathbf{S}^m; \OM)$ and   
	$a\in C^{s,\mu}(\ol\OM), \mu >1/2$. The system 
	\begin{subequations}\label{bvp_UK_transport}
		\begin{align}\label{TransportEq_u0}
			\btheta\cdot\nabla u^0(z,\btheta) +a (z) u^0(z,\btheta) &=  \langle  \bbf(z), \btheta^m \rangle, \quad \text{for } \, (z, \btheta) \in \ol \OM \times \sph,
			\\  \label{TransportEq_uk}
			\btheta\cdot\nabla u^k(z,\btheta) +  a (z) u^k(z,\btheta) &=  u^{k-1}(z,\btheta), \quad \text{for } \, 1 \leq k \leq m,
		\end{align}
		subject to
		\begin{align}\label{uk_Gam-}
			u^k \lvert_{\Gam_{-}} &=0, \quad 0 \leq k \leq m,
		\end{align}	 
	\end{subequations} has a unique solution $u^k \in H^{s}(\Omega\times\sph)$. In particular, if $s\geq 1$, then  $u^k \lvert_{\Gam \times \sph} \in H^{s}(\sph ; H^{s-\frac{1}{2}}(\Gam))$.
	
	Moreover,  $ \langle u^0\lvert_{\Gam_{+}} , u^1\lvert_{\Gam_{+}},   \cdots ,  u^m\lvert_{\Gam_{+}} \rangle$     are in a one-to-one correspondence with 	the attenuated momenta ray transform
	$\langle I_a^0\bbf, I_a^1\bbf,  \cdots , I_a^m\bbf \rangle$ in \eqref{Iaf} 
	via the relations
	\begin{equation}\label{eq:gk-Ik}
		\begin{aligned}
			u^0 \lvert_{\Gam_{+}}(x,\btheta) &=  I_a^0\bbf (x,\btheta),  &&\\ 
			u^k \lvert_{\Gam_{+}} (x, \btheta) &= \sum_{n=1}^{k} (-1)^{n-1} \frac{(x \cdot \btheta)^n}{n !} u^{k-n} \lvert_{\Gam_{+}}(x,\btheta) + \frac{(-1)^{k}}{k !} I^k_a\bbf (x, \btheta), &&\mbox {for }1\leq k \leq m. 
		\end{aligned}
	\end{equation}	
\end{prop}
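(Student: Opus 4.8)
The plan is to exploit the triangular (cascade) structure of \eqref{bvp_UK_transport}: each $u^k$ satisfies a single attenuated transport equation whose source is the previously determined $u^{k-1}$ (with $u^{-1}:=\langle\bbf,\btheta^m\rangle$ understood). Fixing a direction $\btheta$ and parametrizing the line through $x$ by $p(t):=\Pi_\btheta(x)+t\btheta$, the transport operator becomes $\tfrac{d}{dt}$ along $p$, so each equation reduces to the scalar ODE $\tfrac{d}{dt}U^k(t)+a(p(t))U^k(t)=U^{k-1}(t)$ for $U^k(t):=u^k(p(t),\btheta)$. Integrating along characteristics, with the incoming condition $u^k|_{\Gam_{-}}=0$ fixing the constant at the entry point, gives existence and uniqueness; the claimed regularity $u^k\in H^{s}(\OM\times\sph)$ and its trace regularity follow because $\bbf\in H^{s}_0(\mathbf{S}^m;\OM)$ vanishes to order $s$ at $\Gam$, so the compactly supported source produces no glancing-ray singularities, and I would invoke the standard transport-regularity estimate here.

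The core of the argument is an explicit representation of the cascade, proved by induction on $k$. The integrating factor $E(t):=\exp\!\big(-\int_t^{\infty}a(p(s))\,ds\big)$ satisfies $E'=aE$ and equals $1$ beyond the support of $a$; multiplying the $k$-th ODE by $E$ turns it into $(EU^k)'=EU^{k-1}$. Integrating from the incoming end and substituting the inductive hypothesis for $U^{k-1}$, the factor $E$ cancels and a single interchange of the order of integration advances the kernel from $(t-\rho)^{k-1}/(k-1)!$ to $(t-\rho)^{k}/k!$, yielding
\begin{align}\label{plan:cascade}
E(t)\,U^k(t)=\int_{-\infty}^{t}\frac{(t-\rho)^{k}}{k!}\,E(\rho)\,\langle\bbf(p(\rho)),\btheta^m\rangle\,d\rho .
\end{align}
The outgoing boundary point $x\in\Gam_{+}$ sits at parameter $t=x\cdot\btheta$ in this parametrization, and there $E=1$ while $\langle\bbf(p(\rho)),\btheta^m\rangle$ vanishes for $\rho>x\cdot\btheta$, so the upper limit may be pushed to $+\infty$. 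Writing $\tau:=x\cdot\btheta$ and expanding $(\tau-\rho)^{k}$ by the binomial theorem, each term $\int_{-\infty}^{\infty}\rho^{n}E(\rho)\langle\bbf(p(\rho)),\btheta^m\rangle\,d\rho$ is exactly $I_a^{n}\bbf(x,\btheta)$ by \eqref{Iaf}, giving the closed form
\begin{align}\label{plan:closed}
u^k|_{\Gam_{+}}(x,\btheta)=\frac{1}{k!}\sum_{n=0}^{k}\binom{k}{n}(-1)^{n}(x\cdot\btheta)^{k-n}\,I_a^{n}\bbf(x,\btheta).
\end{align}

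To reach the stated form \eqref{eq:gk-Ik}, I would isolate the $n=k$ term $\tfrac{(-1)^k}{k!}I_a^k\bbf$ in \eqref{plan:closed} and recognize the remaining sum, after substituting the same closed form for the lower traces $u^{k-j}|_{\Gam_{+}}$, as $\sum_{j=1}^{k}(-1)^{j-1}\tfrac{(x\cdot\btheta)^{j}}{j!}u^{k-j}|_{\Gam_{+}}$; the resulting binomial identity is routine and can be checked by induction. Finally, \eqref{eq:gk-Ik} is a unipotent lower-triangular system in the moment index: its $k$-th equation determines $u^k|_{\Gam_{+}}$ from $I_a^k\bbf$ and strictly lower-order traces with the invertible coefficient $(-1)^k/k!$ on $I_a^k\bbf$, so it can be solved the other way,
\begin{align}\label{plan:invert}
I_a^{k}\bbf=(-1)^{k}k!\Big(u^{k}|_{\Gam_{+}}-\sum_{n=1}^{k}(-1)^{n-1}\frac{(x\cdot\btheta)^{n}}{n!}\,u^{k-n}|_{\Gam_{+}}\Big),
\end{align}
which gives the asserted one-to-one correspondence. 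The step I expect to be most delicate is the bookkeeping around \eqref{plan:cascade}--\eqref{plan:closed}: correctly locating the boundary point at parameter $x\cdot\btheta$ in the foot-of-perpendicular parametrization of \eqref{Iaf} is precisely what produces the polynomial weights $(x\cdot\btheta)^{k-n}$ and the alternating signs, and it is easy to misplace these.
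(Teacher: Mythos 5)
Your proof is correct, and it shares the paper's skeleton: reduce the triangular system to scalar ODEs along lines, integrate with the attenuation integrating factor and the zero incoming condition, and use convexity of $\OM$ plus the zero-extensions of $\bbf$ and $a$ to replace integrals terminating at the exit parameter $x\cdot\btheta$ by full-line integrals, which identifies them with \eqref{Iaf}. The genuine difference is the middle step. The paper reaches the recursion \eqref{eq:gk-Ik} in one pass, by recursive integration by parts applied to $\int_{-\infty}^{x\cdot\btheta} e^{-\int_t^\infty a}\,u^{k-1}\,dt$, invoking the explicit formula for $u^0$ at the last stage. You instead prove by induction (via one interchange of the order of integration) the closed-form repeated-integration representation
\[
e^{-\int_{x\cdot\btheta}^{\infty}a(\Pi_{\btheta}(x)+s\btheta)\,ds}\,u^{k}(x,\btheta)
=\int_{-\infty}^{x\cdot\btheta}\frac{(x\cdot\btheta-t)^{k}}{k!}\,
e^{-\int_{t}^{\infty}a(\Pi_{\btheta}(x)+s\btheta)\,ds}\,\langle\bbf(\Pi_{\btheta}(x)+t\btheta),\btheta^{m}\rangle\,dt,
\]
expand binomially to get $u^{k}\lvert_{\Gam_{+}}=\frac{1}{k!}\sum_{n=0}^{k}\binom{k}{n}(-1)^{n}(x\cdot\btheta)^{k-n}I_a^{n}\bbf$, and then pass to \eqref{eq:gk-Ik} through an algebraic identity you call routine. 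That identity does hold: comparing coefficients of $I_a^{n}\bbf$ on both sides reduces it to $\sum_{j=1}^{r}\frac{(-1)^{j-1}}{j!\,(r-j)!}=\frac{1}{r!}$ with $r=k-n\geq 1$, which is just $(1-1)^{r}=0$; so there is no gap, only a step left implicit. What your route buys is the explicit closed form tying each $u^{k}\lvert_{\Gam_{+}}$ directly to $I_a^{0}\bbf,\dots,I_a^{k}\bbf$, from which the one-to-one correspondence is immediate as a unipotent triangular system; what the paper's route buys is landing on the stated recursion directly, with no extra combinatorics. Your treatment of existence, uniqueness, and $H^{s}$ regularity is at the same level of brevity as the paper's own.
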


The inverse boundary value problem considered here seeks to recover the solution of the system \eqref{TransportEq_u0} and \eqref{TransportEq_uk} together with the unknown source $\bbf$ from knowledge of $u^k|_{\Gam\times\sph}$,  for all $k=0,...,m$.

For specificity, we call $u^k(z, \btheta)$ of \eqref{bvp_UK_transport}
 \emph{the $k$-level flux}, for 
 $k=0,...,m$.

We use a Fourier approach, where functions $u$ on $\OM\times\sph$ are characterized by the sequence valued map of their Fourier coefficients $ u_{-n}(z)=\frac{1}{2\pi}\int_0^{2\pi} u(z,\btheta)e^{\i n\theta}d\theta\ds$ (non-positive indexes are sufficient) in the angular variable,  $$\OM\ni z\mapsto  \bu(z):=\langle u_{0}(z), u_{-1}(z),u_{-2}(z),\cdots\rangle,$$ 
and  work in the weighted $l^{2}$ spaces 
\begin{align}\label{weightedSpaces}
	l^{2,\frac{p}{2}}(\BN;H^q(\OM)) &: = \left \{\bu= \langle u_{0}, u_{-1},u_{-2},... \rangle: \lnorm{\bu}_{\frac{p}{2},q}^2 := \sum_{j=0}^{\INF} (1+j)^{p} \lnorm{ u_{-j} }^{2}_{ H^q(\OM)} < \INF \right \}
\end{align} with traces $\bg=\bu|_\Gam\in  l^{2,\frac{p}{2}}(\BN;H^{q-\frac{1}{2}}(\Gam))$. The first  (weight)  index $p$ refers to the smoothness in the angular variable, while the second index $q$ shows the smoothness in the spatial variable. 

Since $\bbf$ vanishes on the lines outside $\OM$, we restrict $I_a^k\bbf$ to the lines intersecting $\ol\OM$. These lines are parametrized by points on the boundary $\Gam$ and directions in $\sph$, yielding $I_a^k\bbf$ a function on the torus $\Gam\times\sph$.
While $\Gam$ is also the unit circle, we keep the notation to differentiate from the set of directions. 
The maps in $ l^{2,\frac{p}{2}}(\BN;H^{q-\frac{1}{2}}(\Gam))$  have a norm defined directly on the Fourier lattice
\begin{align}\label{unorm1}
	\lnorm{\bg}_{\frac{p}{2},q-\frac{1}{2}}^2 = 
	\sum_{j=0}^{\INF} \sum_{n=-\INF}^{\INF}  (1+j)^{p}  (1+|n|)^{2q-1} |g_{-j,n}|^2,
\end{align}where $  g_{-j,k}=\frac{1}{2\pi}\int_0^{2\pi} g_{-j}(e^{\i \beta})e^{-\i k\beta}d\beta\ds$, for $k\in\BZ$, $j\geq 0$.

We use the notation  
$\ds
\lnorm{\bv}\lesssim \lnorm{\bw},
$ whenever $\ds \lnorm{\bv}\leq C \lnorm{\bw}$ for some constant $C>0$ independent of $\bv$ and $\bw$, and denote $\ds \lnorm{\bv}\approx \lnorm{\bw}$ if $\ds \lnorm{\bv}\lesssim \lnorm{\bw}\lesssim \lnorm{\bv}$.

	\begin{theorem}\label{Th_main}
	Let $\OM$ be the unit disc, $m \geq 1$ be an integer, and $a\in C^{m+1,\mu}(\ol\OM), \mu >1/2$. For some unknown real valued  $m$-tensor $\bbf \in H^{m+\frac{3}{2}}_0(\mathbf{S}^m; \OM)$, let $ \mathbb{I}_a\bbf :=\langle I_a^0\bbf, I_a^1\bbf, I_a^2\bbf,  \cdots , I_a^m\bbf \rangle$ be the attenuated momenta ray transform as in \eqref{Iaf}.
Then $I_a^k\bbf \in  H^{m+\frac{3}{2}}(\sph; H^{m+\frac{1}{2}}(\Gam))$ for $ 0\leq k \leq m$, and $\bbf $ is determined by  
	$ \mathbb{I}_a\bbf$ with the estimate 
	\begin{align}\label{stability_f_mEven1}
		\lnorm{ \bbf}_{L^2(\OM)}^2
		\lesssim 	\sum_{j=0}^{m}	\lnorm{  I_a^{j} \bbf}^2_{m+\frac{3}{2},j+\frac{1}{2}}.
	\end{align}  
The method of proof is constructive. 
\end{theorem}


\section{A priori estimates for solutions of inhomogeneous Bukhgeim-Beltrami equation} 
\label{sec:estimate_BBE}

The stability estimate in Theorem \ref{Th_main}
requires a priori estimates  
for solutions of the inhomogeneous Bukhgeim-Beltrami equation 
\begin{align}\label{BukhBeltsimple}
	\dba\bv +L^2 \del\bv = \bw,
\end{align}
 in the  weighted spaces $l^{2,\frac{p}{2}}(\BN;H^q(\OM)) $ for arbitrary positive integers  $p$ and $q$, where $L$ denotes the left translation operator $L\bu= L (u_0,u_{-1},u_{-2},...):=(u_{-1},u_{-2},...)$ and 
 $	\dba = \frac{1}{2}(\del_{x_1}+\i \del_{x_2}), \,  \del =\frac{1}{2}(\del_{x_1}-\i \del_{x_2}) $ are  the Cauchy-Riemann operators.
 
  Extension of the results in \cite[Theorem 4.2]{fujiwaraSadiqTamasan19} do not follow from differentiation of the equation, with the difficulty arising in the weighted estimates in $p$.    
  They are due to the fact that the sequence valued maps $\ds \OM \ni z \mapsto (v_{-1}(z),2^pv_{-2}(z), \cdots, n^p v_{-n}(z), \cdots )$ no longer solve a Beltrami equation of the type \eqref{BukhBeltsimple}. 
   However, a priori  estimates of traces of higher order gradients of solution of \eqref{BukhBeltsimple} are necessary. 
 To address this difficulty we first introduce a hierarchy of norms defined inductively  for $p\geq 0$ integer  as follows:
On the set of sequences with elements in $H^q(\OM)$, let $ S^{0}(\BN; H^q(\OM))$ be the space of square summable sequences $\bu$ endowed  with the norm
\begin{align}\label{newnorm}
	\llnorm{\bu}_{0,q}&:=\left(\sum_{j=0}^\infty \|u_j\|_{ H^q(\OM)}^2\right)^{\frac{1}{2}}, 
\end{align}
and for $p\geq 1$, let 
\begin{align*}
	S^{\frac{p}{2}}(\BN; H^q(\OM)) &: = \left \{\bu= \langle u_{0}, u_{-1},u_{-2},... \rangle: \llnorm{\bu}_{\frac{p}{2},q}  < \INF \right \},
\end{align*} where 
\begin{align}\label{newnorm1}
\llnorm{\bu}_{\frac{p}{2},q}&:= \left(\sum_{n=0}^{\INF} \llnorm{L^n \bu}_{\frac{p-1}{2},q}^2 \right)^{\frac{1}{2}}.
\end{align}

The following result shows that the norm in \eqref{weightedSpaces} is  equivalent to the one defined inductively above,
yielding  $S^{\frac{p}{2}}(\BN; H^q(\OM))= 	l^{2,\frac{p}{2}}(\BN;H^q(\OM))$. 

\begin{lemma}\label{lem:equivspaces}

Let $\lnorm{\cdot}^2_{\frac{p}{2},q} $ be the norm in \eqref{weightedSpaces} and $\llnorm{\cdot}^2_{\frac{p}{2},q} $ be the norm defined inductively in  \eqref{newnorm1}, for some integers $p,q\geq 0$.
	Then
	\begin{align}\label{newnorm2}
		\llnorm{\bu}_{\frac{p}{2},q}& 		\approx  \lnorm{\bu}_{\frac{p}{2},q}.
	\end{align}
\end{lemma}
\begin{proof}
	We first show  by induction in $p$ (and $q$ fixed) the equality 
		\begin{align}\label{newnorm3}
	\llnorm{\bu}^2_{\frac{p}{2},q} & =  \sum_{j=0}^{\INF} C^{j+p}_{p}\; \lnorm{u_j}_{ H^q(\OM)}^2,
    \end{align}where $C^{j+p}_p =\frac{(j+p)!}{j! p!}$.

	The case $p=0$ holds by definition \eqref{newnorm}.
	
	Assume next that the equality in \eqref{newnorm3} holds for some fixed $p$.

By definition \eqref{newnorm1},
	\begin{align}\label{newnorm1_hyp+11}
		\llnorm{\bu}_{\frac{p+1}{2},q}^2&= \sum_{n=0}^{\INF} \llnorm{L^n \bu}_{\frac{p}{2},q}^2 =  
		\sum_{m=0}^{\INF} \sum_{n=0}^{\INF}  C^{m+p}_{p} \; \lnorm{u_{m+n}}_{ H^q(\OM)}^2.
	\end{align}
	By changing the index $j=m+n$, for $m\geq 0$, ($j-n \geq 0,$ and $n \leq j$) we get
	\begin{align}\label{newnorm1_hyp+12}
		\sum_{m=0}^{\INF} \sum_{n=0}^{\INF} C^{m+p}_{p}\; \lnorm{u_{m+n}}_{ H^q(\OM)}^2= \sum_{j=0}^{\INF} \sum_{n=0}^{j} C^{j-n+p}_{p}\; \lnorm{u_{j}}_{ H^q(\OM)}^2 = \sum_{j=0}^{\INF}
		\lnorm{u_{j}}_{ H^q(\OM)}^2		
		\sum_{n=0}^{j} C^{j-n+p}_{p}.
	\end{align}
	
	By using Pascal's recurrence and the telescopic cancellations,
$$\ds \sum_{n=0}^{j} C^{j-n+p}_p 
	=  C^{j+p+1}_{p+1}.$$
	Thus, from \eqref{newnorm1_hyp+12}, 
	$\ds \llnorm{\bu}_{\frac{p+1}{2},q}^2= \sum_{j=0}^{\INF} C^{j+p+1}_{p+1} \lnorm{u_j}_{ H^q(\OM)}^2		.$

The equivalence of the norms in 	\eqref{newnorm2} follows from 
$\ds 	\frac{1}{p!}(1+j)^p\leq C^{j+p}_{p}  \leq (1+j)^p.$
\end{proof}



The following result establishes the base case in the  bootstrapping in the decay in $p$ of solutions of 
 \eqref{BukhBeltsimple}.

\begin{theorem}\label{Th_Est_delnu} 
	Let $\bw \in l^{2,\frac{p}{2}+1}(\BN;L^2(\OM))$  for some fixed integer $p \geq0$. 
	If $\bv \in l^{2,\frac{p+1}{2}}(\BN;H^1(\OM))$   solves  \eqref{BukhBeltsimple}, then 
	\begin{align}\label{estimate_newnorm}
		\lnorm{\bv}_{\frac{p}{2},1}^2 \lesssim \lnorm{\bw}_{\frac{p}{2}+1,0}^2+ \lnorm{\bv \lvert_{\Gam}}^2_{\frac{p+1}{2},\frac{1}{2}}.
	\end{align}

\end{theorem}

\begin{proof}
	We reason by induction in $p$.  
	The case $p=0$, 
	\begin{align}\label{estimateSimple}
		\lnorm{\bv}_{0,1}^2 \lesssim \lnorm{\bw}_{1,0}^2+ \lnorm{\bv \lvert_{\Gam}}^2_{\frac{1}{2},\frac{1}{2}},
	\end{align}   
	is established in \cite[Corollary 4.1]{fujiwaraSadiqTamasan19}. 
	
	Assume next that \eqref{estimate_newnorm} holds for $p$:
	\begin{align}\label{estimate_newnorm_p}
			\lnorm{\bv}_{\frac{p}{2},1}^2 \lesssim \lnorm{\bw}_{\frac{p}{2}+1,0}^2+ \lnorm{\bv \lvert_{\Gam}}^2_{\frac{p+1}{2},\frac{1}{2}}.
\end{align}   
	
	Next we bootstrap the decay in $p$ in  \eqref{estimate_newnorm_p} by $\frac{1}{2}.$
	Since $\bv$ solves \eqref{BukhBeltsimple}, 
	for each $n\geq 0$, the left shifted sequence $L^n \bv$ solves the shifted inhomogeneous Bukhgeim-Beltrami equation
	\begin{align}\label{BukhBelt_Ln}
		\dba L^n\bv +L^2 \del L^n\bv = L^n\bw.
	\end{align} 
		Thus, it satisfies the estimate \eqref{estimate_newnorm_p}
	with $\bv$ replaced by  $L^n \bv$, and $\bw$ replaced by $L^n\bw$. 	A summation over $n$ then yields 
	\begin{align*}
		\sum_{n=0}^{\INF}     	\lnorm{L^n\bv}_{\frac{p}{2},1}^2  \lesssim 
		\sum_{n=0}^{\INF}  \lnorm{L^n\bw}_{\frac{p}{2}+1,0}^2+ 	\sum_{n=0}^{\INF} \lnorm{L^n\bv \lvert_{\Gam}}^2_{\frac{p+1}{2},\frac{1}{2}},
	\end{align*} 
	which in view of the norm equivalence \eqref{newnorm2} rewrites  as
	\begin{align}\label{estimate_newnorm_phalf}
		\lnorm{\bv}_{\frac{p+1}{2},1}^2 \lesssim \lnorm{\bw}_{\frac{p+1}{2}+1,0}^2+ \lnorm{\bv \lvert_{\Gam}}^2_{\frac{p+2}{2},\frac{1}{2}}.
	\end{align} 
	By hypothesis, the right-hand-side is finite.			
	
\end{proof}


	For higher regularity estimates in the spatial variable, we need an estimate  on the traces of higher order derivatives on the boundary. Suffices to consider the equation \eqref{BukhBeltsimple} away from the origin 
	and work in polar coordinates.

	Let  ${\Omega}_\epsilon=\{ \epsilon < |z|<1 \}$. In $\Omega_\epsilon$ the  Cauchy-Riemann operators rewrite in terms of the angular derivative  $\del_{\eta}$ and the radial derivative $\del_r$ as
	\begin{align*}
		\del = \frac{e^{-\i \eta}}{2} \left(\del_r -\frac{\i}{r} \del_\eta \right)
		\quad \text{ and} \quad
		\ol{\del}  = \frac{e^{\i \eta}}{2} \left(\del_r +\frac{\i}{r} \del_\eta \right), 
	\end{align*} and the inhomogeneous Bukhgeim-Beltrami equation \eqref{BukhBeltsimple} becomes
	\begin{align}\label{BBeq_interms_delR_ETA}
		A\del_r \bv  =  -\frac{\i}{r} B\del_\eta \bv + 2 e^{\i \eta}\bw, \quad \epsilon <r <1,
	\end{align} where 
	\begin{align}\label{AB_defn}
		A:= e^{2\i \eta} +   L^2, 
		\quad \text{ and} \quad
		B := e^{2\i \eta} -  L^2.
	\end{align} 
	
	While it is easy to see that $\ds A,B: 	l^{2,p}(\BN;H^q(\OM)) \longrightarrow 	l^{2,p}(\BN;H^q(\OM))$ are bounded operators,
they are not invertible on  $l^{2,p}(\BN;H^q(\OM))$. The problem is that the unit circle lies in the spectrum of the left translation $L^2$. However, $A$ will be invertible on a proper subspace as follows. We start with a general result which may be of independent interest.
 \begin{lemma}\label{Aresult_AlexThesis}
		Let $p\geq 0$ be an integer, $L$ be the left shift operator, and $\lambda \in \BC$ with $|\lambda |\geq1$. Let  $\ba \in l^{2,p}$ and $\bc \in l^{2,p+1}$ be sequences satisfying $\ds 	(\lambda +L^2) \ba = \bc.$
		Then 
		\begin{align*}
			\lnorm{\ba}_{l^{2,p}} \leq  \frac{2^{p+1}}{2p+1} \lnorm{\bc}_{l^{2,p+1}}.
		\end{align*}
	\end{lemma}	
\begin{proof}

Since $a_j =\sum_{k=0}^\infty c_{j+2k}\lambda^{-k-1}$,
suffices to show the stronger estimate
\begin{align}\label{discreteineq}
	\left\{\sum_{j=0}^\infty (1+j)^{2p}\left(\sum_{k=0}^\infty |c_{j+k}|\right)^2  \right\} ^{\frac{1}{2}}
	\leq  \frac{2^{p+1}}{(2p+1)} 
	\left\{ \sum_{j=0}^\infty (1+j)^{2p+2}|c_j|^2  \right\} ^{\frac{1}{2}}.
\end{align}
We prove first the continuous version:
\begin{align*}
	&\left\{\int_1^\infty (1+s)^{2p}\left(\int_s^\infty f(t)dt\right)^2 ds\right\}^{\frac{1}{2}}=
	\left\{\int_1^\infty\left(\int_1^\infty s(1+s)^{p}f(s\zeta)d\zeta\right)^2 ds\right\}^{\frac{1}{2}} \\
	& \qquad \leq  2^p	\left\{\int_1^\infty\left(\int_1^\infty s^{p+1}f(s\zeta)d\zeta\right)^2 ds\right\}^{\frac{1}{2}}
   \leq  2^p \int_1^\infty\left\{\int_1^\infty s^{2p+2} f^2(s\zeta)ds\right\}^{\frac{1}{2}}d\zeta
   \\
   	&\qquad   = 2^p
	\int_1^\infty\left\{\int_\zeta^\infty \frac{t^{2p+2}}{\zeta^{2p+3}} f^2(t)dt\right\}^{\frac{1}{2}}d\zeta 
	\leq 2^p
	\left\{\int_1^\infty\zeta^{-p-\frac{3}{2}}d\zeta
	\right\}
	\left\{\int_1^\infty t^{2p+2}f^2(t)dt\right\}^{\frac{1}{2}}
	\\
	&\qquad 
	\leq \frac{2^{p+1}}{2p+1}
	\left\{\int_1^\infty (1+t)^{2p+2}f^2(t)dt\right\}^\frac{1}{2},
\end{align*}
where the second inequality uses Minkowski's.

 By setting $f(t)=|c_{[t]}|$, for$[t]$ the largest integer smaller then $t$, we obtain \eqref{discreteineq}.
\end{proof}	


\begin{prop}\label{prop_Ainv}
		Let  $p\geq 0$ be an integer, and $A,B$ as in \eqref{AB_defn}. Then for any integer $ q \geq 0$, 
			\begin{itemize}\label{Ainv_qmap}
\item[(i)]
	$A^{-1}: 	l^{2,p+q+1}(\BN;H^{q}(\OM_{\epsilon})) \longrightarrow 	l^{2,p}(\BN;H^q(\OM_\epsilon))$,
	
	\item[(ii)]
	$\ds A^{-1} \frac{1}{r} B  \del_{\eta}: 	l^{2,p+q+1}(\BN;H^{q+1}(\OM_\epsilon)) \longrightarrow 	l^{2,p}(\BN;H^q(\OM_\epsilon))$.
	\end{itemize} 
	\end{prop}	
\begin{proof}[Proof of (i)]
	The case $q=0$ in \eqref{Ainv_qmap}
  follows directly from Lemma \ref{Aresult_AlexThesis} and an integration over $\OM_\epsilon$.
	
	Let $q\geq 1$ be arbitrarily fixed.
	
	For any $\bc \in 	l^{2,p+q+1}(\BN;H^q(\OM_\epsilon))$, we have
		\begin{align}\label{Ainvc_exp}
		 \left( \del^{\alpha}_r A^{-1} \bc(r, \eta) \right)_j= \sum_{k=0}^\infty \del^{\alpha}_r c_{j+2k}(r, \eta) e^{-\i(2k+1)\eta}, \quad 0 \leq \alpha \leq q.
	\end{align}
	An application of Lemma \ref{Aresult_AlexThesis} and an integration over $\OM_\epsilon$ yields 
		\begin{align}\label{derAinv_map}
	&	\del_{r}^\alpha A^{-1}: 	l^{2,p+1}(\BN;H^{q}(\OM_\epsilon)) \longrightarrow 	l^{2,p}(\BN;H^{q-\alpha}(\OM_\epsilon)). 
\end{align} 
	
	For $ 0 \leq \beta \leq q$, by Leibniz formula, 
	$$\ds	\del_{\eta}^\beta \left( A^{-1} \bc \right)_j= \sum_{s=0}^{	\beta} (-\i)^s 
	C^{\beta}_s \sum_{k=0}^\infty \left( \del_{\eta}^{\beta-s }c_{j+2k} \right) (2k+1)^s e^{-\i(2k+1)\eta},$$
where $C^{\beta}_s =\frac{\beta!}{(\beta-s)! s!}$.

  An integration over $\OM_{\epsilon}$ in 
    \begin{align*}
 \sum_{j=0}^\infty (1+j)^{2p}	\lvert \del_{\eta}^\beta \left( A^{-1} \bc \right)_j  \rvert^2 
    &\lesssim
    \sum_{s=0}^{\beta} \sum_{j=0}^\infty (1+j)^{2p} \left( \sum_{k=0}^\infty  (k+1)^s \lvert \del_{\eta}^{\beta-s }c_{j+2k} \rvert \right)^2\\
   &\lesssim
  \sum_{s=0}^{\beta} \left [\sum_{j=0}^\infty (1+j)^{2p} \left( \sum_{k=0}^\infty (1+j+k)^s \left| \del_{\eta}^{\beta-s }c_{j+k} \right|   \right)^2 \right]\\
  &\lesssim
  \sum_{s=0}^{\beta} \sum_{j=0}^\infty (1+j)^{2p+2s+2} \left| \del_{\eta}^{\beta-s }c_{j} \right|^2
    \end{align*}
(where the last inequality uses Lemma \ref{Aresult_AlexThesis})
 yields \begin{align*}
	\lnorm{ \del_\eta^\beta  A^{-1} \bc}^2_{p,0} 
	\lesssim
	\sum_{s=0}^{\beta}  \lnorm{ \del_\eta^{\beta-s}  \bc}^2_{p+s+1,q-\beta}.
\end{align*}
Thus,   
		\begin{align}\label{etaqAinv_map}
	&	\del_{\eta}^\beta A^{-1}: 	l^{2,p+q+1}(\BN;H^{q}(\OM_\epsilon)) \longrightarrow 	l^{2,p}(\BN;H^{q-\beta}(\OM_\epsilon)).
\end{align} 

The mixed derivatives follow from  \eqref{derAinv_map} and \eqref{etaqAinv_map} as well, 
	\begin{align}\label{etaqAinv_map1}
	& \del_{r}^{\alpha}	\del_{\eta}^{q-\alpha} A^{-1}: 	l^{2,p+q+1}(\BN;H^{q}(\OM_\epsilon)) \longrightarrow 	l^{2,p}(\BN;L^{2}(\OM_\epsilon)),
\end{align} 
 thus yielding \eqref{Ainv_qmap}.

\noindent {\it Proof of (ii)}: Since 
$ \del_{\eta} : 	l^{2,p}(\BN;H^{q+1}(\OM_\epsilon)) \longrightarrow 	l^{2,p}(\BN;H^q(\OM_\epsilon))$ and 
$ B : 	l^{2,p}(\BN;H^{q}(\OM_\epsilon)) \longrightarrow 	l^{2,p}(\BN;H^q(\OM_\epsilon))$ are bounded, by part (i), 
		\begin{align*}
&A^{-1} \frac{1}{r} B \del_{\eta}: 	l^{2,p+q+1}(\BN;H^{q+1}(\OM_\epsilon)) \longrightarrow 	l^{2,p}(\BN;H^q(\OM_\epsilon)).
\end{align*} 
\end{proof}

Estimates of higher regularity in the spatial variable use first an estimate on the traces of higher order derivatives at the boundary.

\begin{cor}\label{lem_delnu}
	Let $\bw \in l^{2,\frac{p}{2}+1}(\BN;H^q(\OM_\epsilon))$, for some $p \geq 0 $, $q \geq1$ integers.
	If $\bv \in l^{2,\frac{p}{2}+1}(\BN;H^{q+1}(\OM_\epsilon))$ solves  \eqref{BukhBeltsimple}, then 
	\begin{align}\label{est_delnu}
		&\lnorm{ \nabla^q \bv \lvert_{\Gam}}^2_{\frac{p}{2}, \frac{1}{2} } 
		\lesssim
		\lnorm{  \bv \lvert_{\Gam} }^2_{\frac{p}{2}+q+1, q+\frac{1}{2}}
		+ 	\lnorm{ \bw \lvert_{\Gam} }^2_{\frac{p}{2}+q+1, q-\frac{1}{2}},
	\end{align}
	where $\nabla$ stands for either $ \ol{\del} $ or $\del$.
\end{cor}

\begin{proof} Recall 
\eqref{BBeq_interms_delR_ETA}:
 $$\ds	A \del_r \bv  = - \frac{\i}{r} B\del_\eta \bv + 2e^{\i \eta}\bw,$$ with $A,B$ as in \eqref{AB_defn}. 

By Proposition \ref{prop_Ainv} ( applying $A^{-1}$ and $ 	\del_{\eta}^{\beta } \del_{r}^{\alpha}$  with $\alpha+\beta +1=q$),  
		\begin{align*}
	 	\del_{\eta}^{\beta } \del_{r}^{\alpha} \del_r	\bv  = -\i \del_{\eta}^{\beta } \del_{r}^{\alpha}	 A^{-1}\frac{1}{r} B\del_\eta \bv + 2 \del_{\eta}^{\beta } \del_{r}^{\alpha} A^{-1} e^{\i \eta}\bw
	\end{align*}  holds in $l^{2,p}(\BN;H^{1}(\OM_\epsilon))$. By taking the trace on $\Gamma$  yields the estimate
	\begin{align}\label{normest_delnu}
		&\lnorm{ \nabla^q \bv \lvert_{\Gam}}^2_{p,\frac{1}{2}}
		\lesssim
		\lnorm{  \bv \lvert_{\Gam} }^2_{p+q+1, q+\frac{1}{2}}+
		\lnorm{ \bw \lvert_{\Gam} }^2_{p+q+1, q-\frac{1}{2}}.
	\end{align}
	
	The estimate \eqref{normest_delnu} is extended in the $p$ index from non-negative integers to multiples of $\frac{1}{2}$ as follows. 
	 
		Since $L^n \bv$ solves the shifted inhomogeneous Bukhgeim-Beltrami equation \eqref{BukhBelt_Ln}, we can apply the  estimate \eqref{normest_delnu} with $\bv$ replaced by $L^n \bv$ and 
	$\bw$ replaced by $L^n \bw$.
	A summation over $n$ yields 
	\begin{align}\label{Ln_Delrq_onehalfnorm}
		\sum_{n=0}^{\INF}     	
		\lnorm{L^n  \nabla^q \bv \lvert_{\Gam}}^2_{p,\frac{1}{2}}
		 \lesssim \sum_{n=0}^{\INF}    
		\lnorm{  L^n \bv \lvert_{\Gam}}^2_{p+q+1,q+\frac{1}{2}} +\sum_{n=0}^{\INF}    \lnorm{ L^n  \bw \lvert_{\Gam} }^2_{p+q+1,q-\frac{1}{2}}.
	\end{align} 
	
	Each of the sums in the inequality \eqref{Ln_Delrq_onehalfnorm} above are nothing else but the inductively defined norms  \eqref{newnorm1}. 	In conjunction with the  norm equivalence \eqref{newnorm2}, the inequality \eqref{Ln_Delrq_onehalfnorm} rewrites:
	\begin{align*}
		\lnorm{ \nabla^q \bv \lvert_{\Gam} }^2_{p+\frac{1}{2},\frac{1}{2}}  \lesssim
		\lnorm{ \bv \lvert_{\Gam} }^2_{p+q+\frac{3}{2},q+\frac{1}{2}} +\lnorm{ \bw \lvert_{\Gam} }^2_{p+q+\frac{3}{2},q-\frac{1}{2}}. 
	\end{align*}
Since $p$ was an arbitrary integer, this is the identity \eqref{est_delnu}.
\end{proof}

   We are now able to prove a general interior estimate for solution of \eqref{BukhBeltsimple}  in terms of their traces on the boundary. 
	They are crucial in the bootstrapping argument.

\begin{theorem}\label{lem_Est_delnu}  
		Let $ \bw \in l^{2,\frac{p}{2}+q+\frac{3}{2}}(\BN;H^q(\OM)) $  
for some $p \geq 0 $ and $q \geq1$ integers.
If $ \bv \in l^{2,\frac{p}{2}+q+\frac{3}{2}}(\BN;H^{q+1}(\OM)) $ solves  \eqref{BukhBeltsimple}, then
\begin{align} \label{estimate_bvnorm_zerotwo_2}
	\lnorm{\bv}_{\frac{p}{2},q+1}^2 &\lesssim \lnorm{\bw}_{\frac{p}{2}+1,q}^2+ \lnorm{\bv \lvert_{\Gam}}^2_{\frac{p}{2}+q+\frac{3}{2},q+\frac{1}{2}} +  \lnorm{ \bw \lvert_{\Gam}}^2_{\frac{p}{2}+q+\frac{3}{2},q-\frac{1}{2}}.
\end{align}   
\end{theorem}

\begin{proof}

	Since $\nabla^q \bv$ solves 
	\begin{align*}
		\dba (\nabla^q\bv) +L^2 \del (\nabla^q \bv) = \nabla^q \bw,   
	\end{align*}  we apply Theorem \ref{Th_Est_delnu}
  to $\nabla^q\bv$:
	\begin{align*}\label{estimate_bvnorm_zerotwo}
		\lnorm{\bv}_{\frac{p}{2},q+1}^2 &\lesssim \lnorm{\bw}_{\frac{p}{2}+1,q}^2+ 
		+ \lnorm{\nabla^q\bv \lvert_{\Gam}}^2_{\frac{p}{2}+\frac{1}{2},\frac{1}{2}}.
	\end{align*}   
	A further application of  Corollary \ref{lem_delnu}  to the last term establishes \eqref{estimate_bvnorm_zerotwo_2}.

\end{proof}


\section{Proof of Theorem \ref{Th_main} in the non-attenuated ($a=0$) case }\label{sec:Thproof_NART}

It is easy to see (e.g., in \cite[Lemma A.1]{sadiqTamasan23}) that 
\begin{equation}\label{eq:innerprod_fThetaM}
	\ds \langle \bbf, \btheta^m \rangle   =
				\begin{cases} 
		\ds f_0 +\sum_{k=1}^{\frac{m}{2}} \left( f_{2k} e^{-\i (2k)\tta} +  f_{-2k} e^{\i (2k)\tta} \right), & \text{if } m = \text{even}, \\
		\ds \sum_{k=0}^{\frac{m-1}{2}} \left( f_{2k+1} e^{-\i (2k+1)\tta} + f_{-(2k+1)} e^{\i (2k+1)\tta}\right), & \text{if } m =\text{odd},
	\end{cases}
\end{equation} 
for some functions
$\{f_{k}:\; -m\leq k\leq m\}$ 
in an explicit one-to-one correspondence (linear combination) with $\{ f_{ \underbrace{1\cdots1}_{m-k} \underbrace{2\cdots2}_{k}}:\; 0\leq k\leq m\}\ds$. For symmetric tensors, the latter coincide with $f_{i_1 \cdots i_m} $ for all multi-indexes $(i_1, \cdots, i_m)\in \{1,2\}^m$ in which  $2$ occurs exactly $k$ times.

We present the proof  for the even order tensors, while for the odd order case we exhibit the nominal changes only.

Let  $m$ be an even integer and the attenuation $a\equiv 0$. 

In our inverse problem, the solution $\ds  v^k, \; 0 \leq k \leq m$ of the boundary value problem \eqref{bvp_UK_transport} with $a \equiv 0$ is unknown in $\OM$, since $m$-tensors  $\bbf$ is unknown. However, their traces  	
\begin{align}\label{trace:gk}
	g^k=
	\begin{cases} 
		v^k \lvert_{\Gam_{+}} &\mbox{ on } \Gam_{+},\\
		0 &\mbox{ on } \Gam_{-},
	\end{cases}
\end{align}
are known on $\Gam \times \sph$ 
from the momenta data $\langle I^0\bbf, I^1\bbf,  \cdots , I^m\bbf \rangle$ via \eqref{eq:gk-Ik} for $a \equiv 0$ therein:
\begin{align}\label{data_equiv}
		 \langle g^0, g^1,   \cdots ,  g^m \rangle \longleftrightarrow \langle I^0\bbf, I^1\bbf,  \cdots , I^m\bbf \rangle.
\end{align}


While unknown, by Proposition \ref{prop_transEq} the a priori smoothness assumption on  $\bbf$ yield 
$\ds v^k \in H^{m+\frac{3}{2}}(\Omega\times\sph)$. Thus $ \ds g^k, I^k\bbf \in H^{m+\frac{3}{2}}(\sph; H^{m+\frac{1}{2}}(\Gam) )$ for $0 \leq k \leq m$.

For the inverse source problem we work with the sequence of the Fourier coefficients of the $k$-level flux $v^k(z, \cdot)$, in the angular variable:
\begin{align}\label{eq:vkn}
v^k_n(z)=\frac{1}{2\pi} \int_{-\pi}^{\pi} v^k(z,\btheta ) e^{-\i n\theta}d\theta, \quad n \in \BZ, \, 0 \leq k \leq m.
\end{align}
The upper index $k$ denotes the level of the flux, while the lower index $n$ is the Fourier coefficient in the angular variable.

For $\theta=\arg{\btheta}\in (-\pi,\pi]$, the advection operator in polar coordinates becomes $\btheta \cdot \nabla = e^{-\i \tta}\dba + e^{\i \tta}\del$.
By identifying the Fourier coefficients in \eqref{bvp_UK_transport} and  by using  \eqref{eq:innerprod_fThetaM}, the  modes $v^k_n$'s solve
	\begin{subequations}\label{Beltrami_modes_vk_sys}
\begin{align} \label{NART_mEven_fmEq}
	&\ol{\del} v^0_{-(2n-1)}(z) + \del v^0_{-(2n+1)}(z)  = f_{2n}(z), && 0 \leq n \leq m/2, \\ 
	\label{NARTmEvenAnalyticEq_Odd}
	&\ol{\del} v^0_{-(2n-1)}(z) + \del v^0_{-(2n+1)}(z)  = 0, 		&& n \geq m/2+1,\\ 
		\label{NARTmEvenAnalyticEq_Even}
	&\ol{\del} v^0_{-2n}(z) + \del v^0_{-(2n+2)}(z) = 0, &&  n \geq 0, \\ 
	\label{Beltrami_vkEq}
	&\ol{\del} v^k_{-n}(z) + \del v^k_{-n-2}(z) = v^{k-1}_{-n-1}(z),
	&& n \in \BZ, \: 1\leq k \leq m,
\end{align}
and 
\begin{align}\label{trace:gkn1}
	v^k_{-n} \lvert_{\Gam }= g^k_{-n}, \qquad n \in \BZ, \: 0\leq k \leq m.
\end{align}
	\end{subequations}

The existence of the solution to the boundary value problem \eqref{Beltrami_modes_vk_sys}  is postulated by the forward problem.

Since $\bbf$ is real valued, the solution $\ds v^k $ of  \eqref{bvp_UK_transport}  is also  real valued, and its Fourier modes in the angular variable occur in conjugates:
\begin{align}\label{reality_un_complexconjugate}
	v^k_{-n} = \ol{v^k_{n}}, \quad \text{ for }  \; n \geq 0, \,0\leq k \leq m. 
\end{align}	Thus, it suffices to consider the non-positive Fourier modes of $v^k(z,\cdot)$.

For $0\leq k\leq m$, let $\bv^k$ be the sequence valued map of the non-positive Fourier coefficients of the solution  $v^k$ and $\bg^k$ be its corresponding trace on the boundary:
\begin{align}\label{eq:boldvk}
	\bv^k(z)&:= \langle v^k_{0}(z), v^k_{-1}(z), v^k_{-2}(z),  \cdots \rangle,\quad z \in \OM,\\
	\label{seq:gk}
	\bg^k &= \langle g^k_{0}, g^k_{-1}, g^k_{-2},  \cdots \rangle:= \bv^k \lvert_{\Gam}.
\end{align}

In the sequence valued map notation the boundary value problem \eqref{Beltrami_modes_vk_sys} becomes
	\begin{subequations}\label{Beltrami_vk_sys}
		\begin{align} \label{v0_1_f0Eq}
			\ol{\del} \overline{v^0_{-1}}+ \del v^0_{-1}  &= f_{0},\\    \label{Beltrami_v0_Feq}
			\dba\bv^0 +L^2 \del\bv^0 &= L\bF, \\ \label{Beltrami_vkseq_Eq}
			\dba\bv^k +L^2 \del\bv^k &= L \bv^{k-1}, \quad 1 \leq k \leq m, 
		\end{align}
		subject to 
		\begin{align}\label{boldgk_vk}
			\bv^{k} \lvert_{\Gam }&=\bg^{k}, \quad \text{for } \, 0 \leq k \leq m,
		\end{align}	
	\end{subequations}
		where 
	\begin{align}\label{eq:bF}
		\bF:= \langle f_0, 0 , f_{2}, 0, f_{4}, 0, \cdots, f_{m-2}, 0, f_m, 0, 0, \cdots \rangle
	\end{align}  is the sequence valued map  build on the Fourier modes $\{f_{2k} : 0 \leq k \leq \frac{m}{2}\} $ in  \eqref{eq:innerprod_fThetaM} for $m = $ even.

Note that the sequences $\ds  \bv^k, \; 0 \leq k \leq m$ in  \eqref{Beltrami_vk_sys}  are unknown, since $\bF$ is unknown. However,  their traces on $\Gam$ are known  from \eqref{data_equiv}.

   			It is crucial to note that $L^{m+1} \bF = \bzero= \langle 0,0,...\rangle$, so that for $\ds 0 \leq k \leq m$,  
   $L^{m-k}\bv^k$ solves
  the  boundary value problem
   \begin{subequations}\label{Shift_Beltrami_vk_sys}
   	\begin{align} \label{Shift_Beltrami_v0_Feq}
   		\dba[L^{m}\bv^0] +L^2 \del[L^{m}\bv^0] &= \bzero, \\ \label{Shift_Beltrami_vkseq_Eq}
   		\dba[L^{m-k}\bv^k] +L^2 \del[L^{m-k}\bv^k] &= L^{m-k+1}\bv^{k-1}, \quad 1 \leq k \leq m, 
   	\end{align}
   	subject to 
   	\begin{align}\label{boldLmgk_vk}
   		L^{m-k}\bv^{k} \lvert_{\Gam }&=L^{m-k}\bg^{k}, 		
   		\quad 0 \leq k \leq m,
   	\end{align}	
   \end{subequations}
  which does not involve the source $\bF$ (encoding the tensor)!

	The following result identifies shifts of the solutions of the boundary value problem \eqref{Beltrami_vk_sys}  which are determined solely from the boundary data $ \langle \bg^0, \bg^1,   \cdots , \bg^m\rangle $ as in \eqref{seq:gk} and not by the source $\bF$.
	
	\begin{prop}\label{prop_estimate_delvk}
		Let $ \langle \bg^0, \bg^1,   \cdots , \bg^m\rangle $ be the data as in \eqref{seq:gk} obtained for some unknown even order $m$-tensor in $ H^{m+\frac{3}{2}}_0(\mathbf{S}^m; \OM)$. Then $\bg^k \in l^{2,m+\frac{3}{2}}(\BN;H^{m+\frac{1}{2}}(\Gam))$, and the  unique solution $\ds \bv^k$  of the boundary value problem \eqref{Beltrami_vk_sys} 
		satisfies
		\begin{equation}\label{eq:Lm-k_vk1}
			\begin{aligned}
				L^{m-k}	\bv^k (z)&=  \sum_{j=0}^{k} \BT^j L^{m-k+j}  [\B \bg^{k-j}](z), \quad z \in \OM, \; 0 \leq k \leq m,
			\end{aligned}
		\end{equation}	
		where $\B$ is the Bukhgeim-Cauchy operator  in \eqref{BukhgeimCauchyFormula}, 
		and $\BT$ is the operator in \eqref{T_Formula}. 
		Moreover, 
		\begin{equation}\label{estimate_delvk}
			\begin{aligned}
				\lnorm{ L^{m-k}   \bv^k}_{m-k,k+1}^2 	&\lesssim 
					\sum_{j=0}^{k}	\lnorm{  L^{m-j}  \bg^{j}}^2_{m +\frac{3}{2},j+\frac{1}{2}}, \quad 0 \leq k \leq m.
			\end{aligned} 
		\end{equation}
	\end{prop}
	\begin{proof}
		
			The existence of the solution to the boundary value problem \eqref{Beltrami_vk_sys} is postulated by the forward problem \eqref{bvp_UK_transport} (with $a\equiv 0$). 
			Moreover, by Proposition \ref{prop_transEq}, 
		the unknown $k$-level flux  $v^k 	\in H^{m+\frac{3}{2}}(\sph; H^{m+1}(\OM))$, yielding 
		$\bv^k \in l^{2,m+\frac{3}{2}}(\BN;H^{m+1}(\OM))$ and $\bg^k \in l^{2,m+\frac{3}{2}}(\BN;H^{m+\frac{1}{2}}(\Gam))$.

Recall that 
		$L^{m-k}\bv^k$ solve
		the system \eqref{Shift_Beltrami_vk_sys}.
		
		First we prove the formula \eqref{eq:Lm-k_vk1} with estimate \eqref{estimate_delvk} by induction in $k$,  for  $0 \leq k \leq m$.
		
       Case $k=0$: Since $L^{m} \bv^0 $ is $L^2$-analytic, the Bukhgeim-Cauchy Integral formula \eqref{BukhgeimCauchyFormula} determines  the sequence $L^{m} \bv^0 $ 
		inside $\OM$ from its boundary values via
		\begin{align}\label{construction_Lmv0}
			L^{m}\bv^0(z) &	:= \B L^{m}\bg^0  (z), \quad z\in \OM.
		\end{align}
		Applying Theorem \ref{Th_Est_delnu}  to the boundary value problem \eqref{Shift_Beltrami_v0_Feq} and \eqref{boldLmgk_vk} yields 
		\begin{align}\label{estimate_delv0}
			\lnorm{ L^{m}  \bv^0}_{m,1}^2 	\lesssim \lnorm{ L^{m} \bg^0}^2_{m+\frac{1}{2},\frac{1}{2}},
		\end{align}
		thus showing the $k=0$ case.

	Next, we assume \eqref{eq:Lm-k_vk1} and \eqref{estimate_delvk} holds for $k$:
		\begin{equation}\label{eq:Lm-k_vk_1}
			\begin{aligned}
				L^{m-k}	\bv^k (z)&=  \sum_{j=0}^{k} \BT^j L^{m-k+j}  [\B \bg^{k-j}](z), \quad z \in \OM, 
			\end{aligned}
		\end{equation}
		satisfies 
		\begin{equation}\label{estimate_delvk_1}
			\begin{aligned}
				\lnorm{ L^{m-k}   \bv^k}_{m-k,k+1}^2 	&\lesssim 
				\sum_{j=0}^{k}	\lnorm{  L^{m-j}  \bg^{j}}^2_{m+\frac{3}{2},j+\frac{1}{2}}, 
			\end{aligned}
		\end{equation} and prove it  for $k+1$.

		Starting from  equation \eqref{Shift_Beltrami_vkseq_Eq} and \eqref{boldLmgk_vk} for $k+1$, $L^{m-(k+1)}\bv^{k+1}$ solves 
		\begin{subequations}\label{Beltrami_Lmvk_sys}
			\begin{align} \label{Beltrami_vkEq_2}
				\dba [L^{m-(k+1)}\bv^{k+1}] +L^2 \del [L^{m-(k+1)}\bv^{k+1}] = L^{m-k} \bv^{k},
			\end{align}
			subject to 
			\begin{align}\label{bold_Lmgk_Lmvk}
				L^{m-(k+1)} \bv^{k+1} \lvert_{\Gam }&=L^{m-(k+1)} \bg^{k+1}.
			\end{align}	
		\end{subequations}

		Applying  Proposition \ref{prop_Bukhgeimpompeiu}
		to \eqref{Beltrami_Lmvk_sys}, 
		 the solution $L^{m-(k+1)}\bv^{k+1}$   is given by \eqref{BP_Formula_vk}:
		\begin{align*} 
			L^{m-(k+1)}	\bv^{k+1} 
			&=     	\B L^{m-(k+1)} \bg^{k+1}+	
			\BT (L^{m-k}\bv^{k}).
		\end{align*} 
	
	     	Following directly from their definitions, the operators $L$ and $\B$ commute, and the operators $L$ and $\BT$ commute. 
		Using these commutating properties and the induction hypothesis \eqref{eq:Lm-k_vk_1} yields 
		\begin{align*} 
			L^{m-(k+1)}	&\bv^{k+1} 
			=     	L^{m-(k+1)} 	\left[\B  \bg^{k+1}\right]+	
			\BT (L^{m-k}\bv^{k})\\
			&=     	L^{m-(k+1)} 	\left[\B  \bg^{k+1}\right] +	
			\sum_{j=0}^{k} \BT^{j+1} L^{m-k+j}  [\B   \bg^{k-j}] 
			=     	 \sum_{j=0}^{k+1} \BT^{j} L^{m-(k+1)+j}  [\B   \bg^{k+1-j}] .
		\end{align*} 
		This finished the inductive step of \eqref{eq:Lm-k_vk_1}.
		
		Next, we show the induction step for the estimate \eqref{estimate_delvk_1} for the solution $L^{m-(k+1)}\bv^{k+1}$ of \eqref{Beltrami_Lmvk_sys}.

		By applying Theorem \ref{lem_Est_delnu}    to \eqref{Beltrami_Lmvk_sys} 		yields 
	\begin{align*}
		\lnorm{L^{m-k-1}  \bv^{k+1}}_{m-k-1,k+2}^2 		
		&\lesssim \lnorm{ L^{m-k}   \bv^k}_{m-k,k+1}^2 +  \lnorm{L^{m-k}   \bv^{k} \lvert_{\Gam} }^2_{m+\frac{3}{2},k+\frac{1}{2}} +  \lnorm{L^{m-k-1}   \bv^{k+1} \lvert_{\Gam} }^2_{m+\frac{3}{2},k+\frac{3}{2}} \\
		&\lesssim   \sum_{j=0}^{k}	\lnorm{  L^{m-j}  \bv^{j}\lvert_{\Gam} }_{m+\frac{3}{2},j+\frac{1}{2}}^2 +\lnorm{L^{m-k-1}   \bv^{k+1} \lvert_{\Gam} }^2_{m+\frac{3}{2},k+\frac{3}{2}}  \\
 &\lesssim 
\sum_{j=0}^{k+1}	\lnorm{  L^{m-j}  \bg^j }_{m+\frac{3}{2},j+\frac{1}{2}}^2,
	\end{align*}
	where the second inequality uses  the induction hypothesis \eqref{estimate_delvk_1}, while the last one is regrouping.
\end{proof}

\subsection{The reconstruction method} \label{sec:reconstruct}

We reconstruct the $m$-tensors $\bbf$ by first recovering $\bv^k, 0 \leq k \leq m$ in two steps, see Figure \ref{fig:Flow_Eventensor}.

\begin{itemize}
	\item \underline{Step I (Sweep down):} 
	
	Level by level, starting from $k=0$ to $k=m$, we recover $\ds L^{m-k} \bv^k$ by solving the boundary value problem
		\begin{align*} 
			\dba \left( L^{m-k} \bv^k \right) +L^2 \del \left( L^{m-k} \bv^k \right)  &=  L^{m+1-k}  \bv^{k-1}, \quad 1 \leq k \leq m,
		\end{align*}
		subject to 
		\begin{align*}
			L^{m-k}  \bv^{k} \lvert_{\Gam }= L^{m-k} \bg^{k}, \quad \text{for } \, 1\leq k \leq m.
		\end{align*}	
	
	Proposition \ref{prop_estimate_delvk} ensures that the unique solution $\ds L^{m-k} \bv^k$ given by \eqref{eq:Lm-k_vk1}  satisfies estimate
	\eqref{estimate_delvk}. 
		In particular,  when $k=m$,  the entire sequence $ \bv^m$ (not just some translation of it) is recovered from  the data $	\bg^j$, $0 \leq j \leq m$,
		\begin{equation}\label{eq:vm-gk1}
			\begin{aligned}
				\bv^m(z) &=  \sum_{j=0}^{m} [\BT L]^j  \left(\B  \bg^{m-j} \right) (z), \quad z \in \OM,
			\end{aligned}
		\end{equation}	
		with the estimate 
		\begin{align}\label{estimate_delvm}
			\lnorm{  \bv^m}_{0,m+1}^2 	&\lesssim 
			\sum_{j=0}^{m}	\lnorm{ L^{m-j}  \bg^{j}}^2_{m+\frac{3}{2},j+\frac{1}{2}}; 
		\end{align}
		where $\B$ and $\BT$ are the operators  in \eqref{BukhgeimCauchyFormula}, respectively in \eqref{T_Formula}. 
		
		\begin{figure}[ht!]
			\centering
			\includegraphics[scale=0.51]{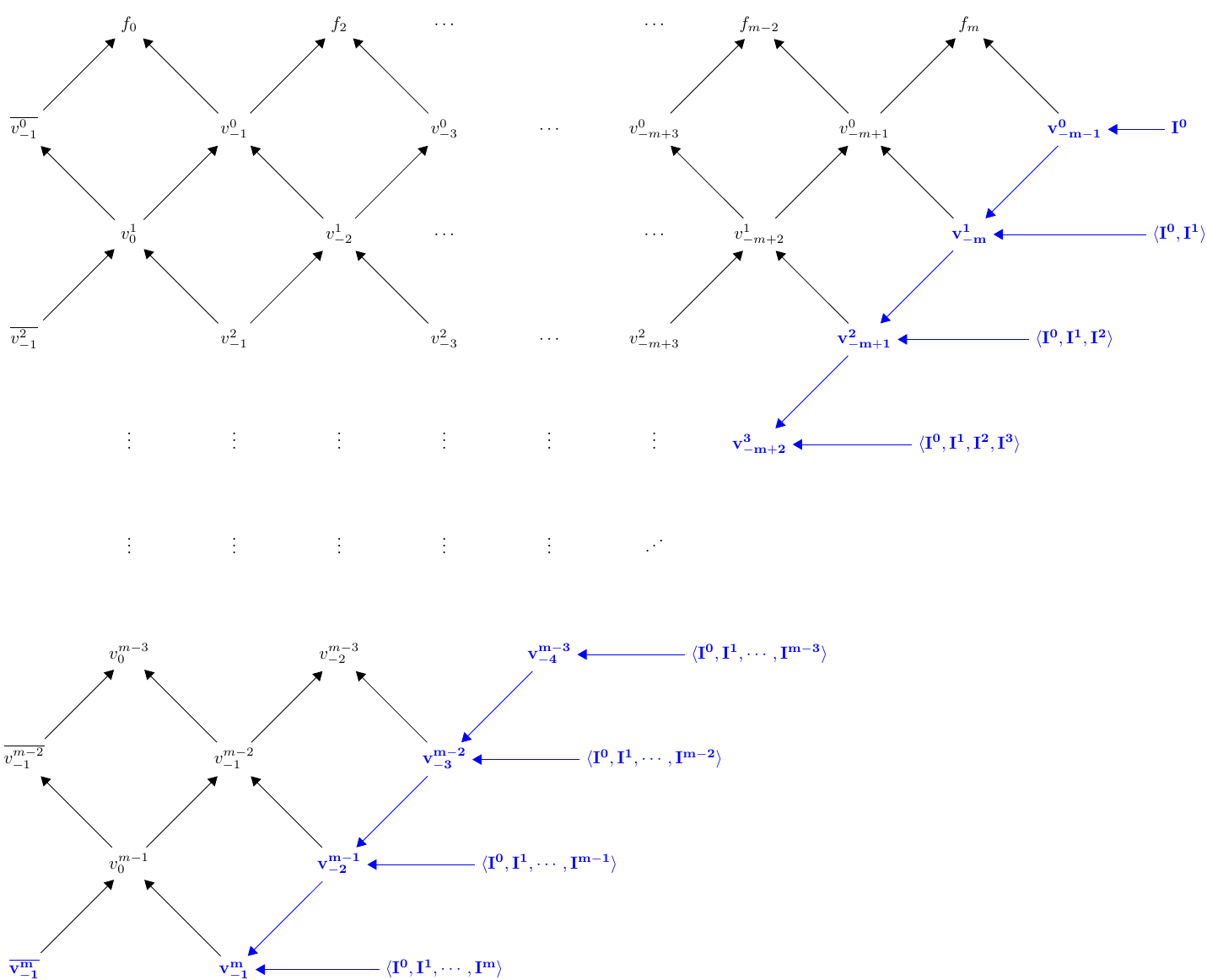}
			\caption{{Flow of the reconstruction of even order tensors. In the sweep down, all the modes colored in blue are determined layer by layer from the momenta ray data and the previous layer. 
					In the sweep up, also layer by layer starting from the bottom, the remaining coefficients are recovered. The arrows indicate which modes determine what.	
			}}
			\label{fig:Flow_Eventensor}
		\end{figure}


		\item \underline{Step II (Sweep up):} 
		
		Level by level, starting from  $k=m$ to $k=1$,  
  use \eqref{Beltrami_vkseq_Eq} in its component-wise form \eqref{Beltrami_vkEq},  
		\begin{align} \label{InhomBeltrami_vkEq1}
			v^{k-1}_{-n-1} &:=  \ol{\del} v^k_{-n} + \del v^k_{-n-2}, \quad  n \geq 0,
		\end{align} 
		to recover $L \bv^{k-1}$ from  the knowledge of $\bv^k$.
		
		Moreover, repeated differentiation of \eqref{InhomBeltrami_vkEq1} yields 
		\begin{align*} 
			\nabla^{q} \left( L \bv^{k-1} \right) = 	\dba [\nabla^{q}\bv^{k}] +L^2 \del [\nabla^{q}\bv^{k}], \quad  q \geq 1,
		\end{align*} 
		with the estimate  
		\begin{align}\label{estimate_vk_sweepup}
			\lnorm{ L \bv^{k-1}}_{0, q}^2 	\lesssim \lnorm{ \bv^{k}}_{0, q+1}^2.
		\end{align}

			We use \eqref{InhomBeltrami_vkEq1} recursively 
			to recover the entire sequences $\bv^{m-1},  \cdots, \bv^1, \bv^0$ with estimate 
			\begin{align}\label{estimate_v0_sweepup}
				\lnorm{ L \bv^{0}}_{0,1}^2 	&\lesssim \lnorm{ \bv^{m}}^2_{0,m+1}.
			\end{align} 
		
			\item \underline{The reconstruction of the $m$-tensors $\bbf$.}
			
			With $\bv^0$ known, we recover $\bF$ via \eqref{Beltrami_v0_Feq} and  \eqref{v0_1_f0Eq},
			\begin{equation}\label{eq:eGbF}
				\begin{aligned}
					f_{0}&:= 2 \re \left[ \del v^0_{-1} \right],  \quad \text{and} \quad 
					L\bF:= \dba\bv^0 +L^2 \del\bv^0.
				\end{aligned}
			\end{equation}
           Moreover, using in order  \eqref{eq:eGbF},    \eqref{estimate_v0_sweepup} and  \eqref{estimate_delvm}, we have the  estimate 
			\begin{align}\label{estimate_eGF_delu0}
				\lnorm{ \bF}_{0,0}^2  \lesssim 	 \lnorm{ \bv^{0}}_{0,1}^2 	&\lesssim \lnorm{ \bv^{m}}^2_{0,m+1} \lesssim 
				\sum_{j=0}^{m}	\lnorm{ L^{m-j}  \bg^{j}}^2_{m+\frac{3}{2},j+\frac{1}{2}}.
			\end{align} 
%

		\end{itemize}
		
		Finally, the components $f_{i_1 \cdots i_m} = f_{ \underbrace{1\cdots1}_{m-k} \underbrace{2\cdots2}_{k}}$
		are defined via the explicit one-to-one correspondence (linear combination) 
		with   $\{f_{2k}:\; -m/2\leq k\leq m/2\}$.
		
		Since $\ds \lnorm{ \bbf}_{L^2(\OM)}^2 \lesssim \lnorm{ \bF}_{0,0}^2  \lesssim \lnorm{ \bbf}_{L^2(\OM)}^2$, Theorem \ref{Th_main} is, thus,  proven for $m$ even and $a \equiv 0$.
		
		\qed

		The proof of Theorem \ref{Th_main} for odd $m$-order tensors follows similarly to the even case with some nominal changes:  As before, let $\bv^k $ 
	   be the sequence valued map of the Fourier coefficients of  the $k$-level flux solution  $v^k$ of the boundary value problem \eqref{bvp_UK_transport} (with $a \equiv 0$)  and 	$\ds \bg^k = \bv^k \lvert_{\Gam} $ 
		be its trace. 
		Then $\bv^k$ solves (contrast with the system \eqref{Beltrami_vk_sys})
		\begin{subequations}\label{Beltrami_vk_sys_NART_mOdd}
			\begin{align}  \label{Beltrami_v0_Feq_NART_mOdd}
				\dba\bv^0 +L^2 \del\bv^0 &= L\bF, \\ \label{Beltrami_vkseq_NART_mOdd}
				\dba\bv^k +L^2 \del\bv^k &= L \bv^{k-1}, \quad 1 \leq k \leq m, 
			\end{align}
			subject to 
			\begin{align}\label{boldgk_vk_NART_mOdd}
				\bv^{k} \lvert_{\Gam }&=\bg^{k}, \quad \text{for } \, 0 \leq k \leq m,
			\end{align}	
	where 
			\begin{align}\label{eq:bF_mODD}
		{\bF}:= \langle 0, f_1, 0 , f_{3}, 0, f_{4}, 0, \cdots, f_{m-2}, 0, f_m, 0, 0, \cdots \rangle 
	\end{align}  
 is  build on the Fourier modes $\{f_{2k+1} : 0 \leq k \leq \frac{m-1}{2}\} $ in  \eqref{eq:innerprod_fThetaM} for $m = $ odd.
%
	 	Note also the change in the definition of ${\bF}$ above from the one in the even tensor case in \eqref{eq:bF}. 		
				\end{subequations}
						
		 Proposition \ref{prop_estimate_delvk} holds verbatim for odd $ m$. 
		
		Following the two step reconstruction method in the even case,  $\bF$ and thus $\bbf$ are similarly recovered with the estimate \eqref{stability_f_mEven1}. The inversion of the momenta Doppler transform ($m=1$) is detailed in the numerical section \ref{sec:numerics}.
	


\section{Proof of Theorem \ref{Th_main} in the attenuated case 
}

As in \cite{sadiqTamasan01} we treat the attenuated case by the reduction to the non-attenuated case via the special integrating factor function   introduced in \cite{finch}:
\begin{align*}
	h(z,\btheta) := \int_{0}^{\INF} a(z+t\btheta)dt -\frac{1}{2} \left( I - \i H \right) Ra(z\cdot \btheta^{\perp}, \btheta^{\perp}),
\end{align*} where $H \psi(s,\btheta) = \ds \frac{1}{\pi} \int_{-\INF}^{\INF} \frac{\psi(t,\btheta)}{s-t}dt $ is the Hilbert transform 
taken in the linear variable, and $Ra(s, \btheta^{\perp}) =  \int_{-\INF}^{\INF} a\left( s \btheta^{\perp} +t \btheta \right)dt$ is the Radon transform of  $a$.

It is known that all the negative Fourier modes of $h$ vanish \cite{finch,nattererBook}, yielding 
\begin{align*}
	e^{- h(z,\btheta)} &:= \sum_{k=0}^{\INF} \alpha_{k}(z) e^{\i k\tta}, \quad e^{h(z,\btheta)} := \sum_{k=0}^{\INF} \beta_{k}(z) e^{\i k\tta}, \quad (z, \btheta) \in \ol\OM \times \sph. 
\end{align*}
In \cite{fujiwaraSadiqTamasan19}, 
 the convolution operators $e^{\pm G}$ are defined by
 \begin{equation} \label{eGop_leftshift}
\begin{aligned}
	e^{-G} \bu &:= \balpha \ast \bu \text{ and } e^{G} \bu := \bbeta \ast \bu,  \text{ where }  
\balpha := \langle \alpha_{0}, \alpha_{1},    ... \rangle, 
\; \bbeta := \langle \beta_{0}, \beta_{1},  ... \rangle. 
\end{aligned}
\end{equation}

The following result connecting the attenuated to the non-attenuated case is a slight generalization of \cite[Lemma  2.2]{fujiwaraSadiqTamasan19}.
\begin{lemma}\label{beltrami_reduction}
	Let $a\in C^{1,\mu}(\ol \OM)$, $\mu>1/2$.
	Then 
	$e^{\pm G}:l^{2,p}(\BN;H^q(\OM))\to l^{2,p}(\BN;H^q(\OM)) $
	are bounded. Moreover, 
	
		(i) if $\bu \in l^{2}(\BN;H^1(\OM))$  solves $\ds \dba \bu +L^2 \del \bu+ aL\bu = \bw$, then  $\ds \bv= e^{-G} \bu \in l^{2}(\BN;H^1(\OM))$  solves $\dba \bv + L^2\del \bv =  e^{-G} \bw$;
	
	(ii) Conversely, if $\bv \in l^{2}(\BN;H^1(\OM))$  solves $\dba \bv + L^2\del \bv =e^{-G} \bw$, then $\ds \bu= e^{G} \bv \in l^{2}(\BN;H^1(\OM))$  solves $\ds \dba \bu +L^2 \del \bu+ aL\bu = \bw$.
\end{lemma}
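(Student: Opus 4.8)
The plan is to reduce both claims to a single \emph{intertwining identity} between the convolution operator $e^{-G}$ in \eqref{eGop_leftshift} and the homogeneous Bukhgeim--Beltrami operator $\dba+L^2\del$, namely that
\begin{equation*}
	(\dba+L^2\del)(e^{-G}\bu)=e^{-G}\big((\dba+L^2\del+aL)\bu\big) \tag{$\star$}
\end{equation*}
holds for every $\bu\in l^{2}(\BN;H^1(\OM))$. Once $(\star)$ is available, part (i) is immediate: applying $e^{-G}$ to $\dba\bu+L^2\del\bu+aL\bu=\bw$ and using $(\star)$ on the left gives $\dba\bv+L^2\del\bv=e^{-G}\bw$ with $\bv=e^{-G}\bu$. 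Part (ii) follows by writing $\bu:=e^{G}\bv$ and noting that $e^{-G}$ and $e^{G}$ are mutually inverse, so that $\bv=e^{-G}\bu$ and $(\star)$ converts the hypothesis into the attenuated equation after one more application of $e^{G}$.

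The only place the fine structure of the integrating factor enters is the recursion for the modes $\alpha_k$ in \eqref{ehEq}. Differentiating the defining formula for $h$ along $\btheta$, the divergent-beam term $\int_0^\infty a(z+t\btheta)\,dt$ contributes $-a$ while the Radon--Hilbert term depends only on $z\cdot\btheta^\perp$ and is therefore constant along $\btheta$; hence $\btheta\cdot\nabla h=-a$, and consequently $\btheta\cdot\nabla e^{-h}=a\,e^{-h}$. Writing $\btheta\cdot\nabla=e^{-\i\tta}\dba+e^{\i\tta}\del$ and matching the angular Fourier modes of $\btheta\cdot\nabla e^{-h}=a\,e^{-h}$ against $e^{-h}=\sum_{k\geq0}\alpha_k e^{\i k\tta}$ yields $\dba\alpha_0=0$ together with
\begin{equation*}
	\dba\alpha_k+\del\alpha_{k-2}=a\,\alpha_{k-1},\qquad k\geq 1,
\end{equation*}
with the convention $\alpha_{-1}=\alpha_{-2}=0$.

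To prove $(\star)$ I would compute $(\dba+L^2\del)(e^{-G}\bu)$ term by term on $e^{-G}\bu=\sum_{k\geq0}\alpha_k L^{k}\bu$. Since $L$ commutes with the Cauchy--Riemann operators, the product rule splits the result into a \emph{transport} part $\sum_{k\geq0}\alpha_k L^{k}(\dba\bu+L^2\del\bu)=e^{-G}(\dba\bu+L^2\del\bu)$ and an \emph{$\alpha$-derivative} part $\sum_{k\geq0}(\dba\alpha_k)L^{k}\bu+\sum_{k\geq0}(\del\alpha_k)L^{k+2}\bu$. Re-indexing the second sum and inserting the recursion collapses the $\alpha$-derivative part to $\sum_{k\geq1}a\,\alpha_{k-1}L^{k}\bu=a\,L\,e^{-G}\bu$. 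Using that $e^{-G}$ commutes with $L$ and, pointwise in $z$, with multiplication by the scalar $a(z)$, this equals $e^{-G}(aL\bu)$; adding the two parts gives $(\star)$.

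It remains to assemble the conclusions and address the well-definedness. By Proposition \ref{eGprop} (with $p=0$, $q=1$) the maps $e^{\pm G}$ are bounded on $l^{2}(\BN;H^1(\OM))$, so $\bv=e^{-G}\bu$ and $\bu=e^{G}\bv$ lie in the stated space; moreover $e^{-G}e^{G}=e^{G}e^{-G}=I$ because $\balpha\ast\bbeta$ is the convolution identity, being the mode sequence of $e^{-h}e^{h}=1$. Then (i) and (ii) follow from $(\star)$ exactly as outlined in the first paragraph. The step I expect to be the main obstacle is purely technical: justifying the term-by-term differentiation of the infinite series defining $e^{-G}\bu$ and the convergence of the rearranged sums used in proving $(\star)$. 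This is precisely where the summability $\lnorm{\balpha}_{l^{1,1}_{\INF}(\ol\OM)}<\infty$ and $\lnorm{\del\balpha}_{l^{1,1}_{\INF}(\ol\OM)}<\infty$ --- guaranteed by $a\in C^{1,\mu}(\ol\OM)$, $\mu>1/2$, as in Proposition \ref{eGprop} --- must be used to turn the formal manipulations into norm-convergent identities in $l^{2}(\BN;H^1(\OM))$.
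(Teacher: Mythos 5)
Your proof is correct, and its computational core is the same as the paper's: expand $(\dba+L^2\del)(e^{-G}\bu)$ by the product rule and cancel the $\alpha$-derivative terms via the recursion $\dba\alpha_0=0$, $\dba\alpha_1=a\alpha_0$, $\dba\alpha_{k+2}+\del\alpha_k=a\alpha_{k+1}$, $k\geq 0$, which the paper imports from \cite[Lemma 4.2]{sadiqTamasan02} rather than rederiving from $\btheta\cdot\nabla h=-a$ as you do (your derivation of the recursion is correct). The differences are in packaging and in the converse. For part (i), isolating the intertwining identity $(\star)$ versus the paper's direct substitution of $\dba\bu+L^2\del\bu=\bw-aL\bu$ into the expansion is the same algebra performed in a different order. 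For part (ii) the routes genuinely diverge: the paper runs the analogous computation for $e^{G}$, using the companion recursion satisfied by the modes $\beta_k$ of $e^{h}$ (with $a$ replaced by $-a$, \cite[Lemma 4.2 (iv)]{sadiqTamasan02}), whereas you deduce (ii) from (i) together with the mutual-inverse property $e^{G}e^{-G}=e^{-G}e^{G}=I$. Your route spares a second calculation, but the burden shifts onto that inverse property: one must verify $\sum_{j=0}^{k}\alpha_j\beta_{k-j}=\delta_{k0}$ (the modes of $e^{-h}e^{h}=1$, both factors having only nonnegative modes) and justify the rearrangement $e^{-G}e^{G}\bu=\sum_{n}\bigl(\sum_{j+k=n}\alpha_j\beta_k\bigr)L^{n}\bu$, which the $l^{1,1}_{\infty}(\ol\OM)$ bounds on $\balpha,\bbeta$ from Proposition \ref{eGprop} make absolutely convergent. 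This is no harder than the paper's second computation, and the paper itself invokes $\bF=e^{G}\left[e^{-G}\bF\right]$ in Section \ref{sec:attenuated}, so the inverse property is needed there anyway; both treatments of the converse are sound, yours trading a computation for an algebraic lemma.
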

The mapping properties of  $e^{\pm G}$ and the case $\bw=\bzero$ in (i) and (ii) are proven in \cite{fujiwaraSadiqTamasan19}. 
The case $\bw\neq \bzero$ follows similarly.

\qed

 Below we prove Theorem \ref{Th_main} in the attenuated case  for even order tensors. The proof for the odd order tensors follows similarly.

Let  $m$ be an even integer and attenuation  $a\in C^{m+1,\mu}(\ol\OM), \mu >1/2$.

In our inverse problem, the $k$-level flux solution $\ds  u^k, \; 0 \leq k \leq m$ of the boundary value problem \eqref{bvp_UK_transport} is unknown in $\OM$, since $m$-tensors  $\bbf$ is unknown. However, their traces  	
\begin{align}\label{trace:agk}
	g^k=
	\begin{cases} 
		u^k \lvert_{\Gam_{+}} &\mbox{ on } \Gam_{+},\\
		0 &\mbox{ on } \Gam_{-},
	\end{cases}
\end{align}
are known on $\Gam \times \sph$ 
from the momenta data $\langle I^0_a\bbf, I^1_a\bbf,  \cdots , I^m_a\bbf \rangle$ via \eqref{eq:gk-Ik}:
\begin{align}\label{data_equiv_ART}
	\langle g^0, g^1,   \cdots ,  g^m \rangle \longleftrightarrow \langle I_a^0\bbf, I_a^1\bbf,  \cdots , I_a^m\bbf \rangle.
\end{align}

While unknown, by Proposition \ref{prop_transEq} the a priori smoothness assumption on $m$-tensors  $\bbf$ and  attenuation $a$ yield 
$\ds u^k \in H^{m+\frac{3}{2}}(\Omega\times\sph)$. Thus $ \ds g^k, I^k_a\bbf \in H^{m+\frac{3}{2}}(\sph; H^{m+\frac{1}{2}}(\Gam) )$ for $0 \leq k \leq m$.

For $0\leq k\leq m$, let $\bu^k =  \langle u^k_{0}, u^k_{-1}, u^k_{-2} \cdots \rangle$ be the sequence valued map of the Fourier coefficients of  $u^k$ in the angular variable,  and $\bg^k= \bu^k \lvert_{\Gam}$ be its corresponding trace on the boundary.

 By identifying the same order modes in \eqref{bvp_UK_transport},  $\bu^k$ solves
\begin{subequations}\label{Beltrami_uk_sys_ART}
	\begin{align} \label{u0_1_f0Eq_ART}
		\ol{\del} \overline{u^0_{-1}}+ \del u^0_{-1} +au^0_{0} &= f_{0}, \\   \label{Beltrami_u0_Feq_ART}
	\dba\bu^0 +L^2 \del\bu^0 + a L\bu^0 &= L \bF, \\ \label{Beltrami_ukseq_Eq_ART}
	\dba\bu^k +L^2 \del\bu^k + a L\bu^k&= L \bu^{k-1}, \quad 1 \leq k \leq m, 
\end{align}
	subject to 
	\begin{align}\label{boldgk_uk_ART}
		\bg^{k}&=\bu^{k} \lvert_{\Gam }, \quad \text{for } \, 0 \leq k \leq m,
	\end{align}	
\end{subequations} 
where $\bF$ is as defined in \eqref{eq:bF}.

  The existence of the solution to the boundary value problem \eqref{Beltrami_uk_sys_ART} is postulated by the forward problem.  By Proposition \ref{prop_transEq}, 
the $k$-level flux   $u^k 	\in H^{m+\frac{3}{2}}(\sph; H^{m+1}(\OM))$. Thus, the sequences
$\bu^k \in l^{2,m+\frac{3}{2}}(\BN;H^{m+1}(\OM))$ and $\bg^k \in l^{2,m+\frac{3}{2}}(\BN;H^{m+\frac{1}{2}}(\Gam))$.

For $e^{-G}$ as in \eqref{eGop_leftshift}, define
\begin{align}\label{eq:vkuk}
\bv^k := e^{-G} \bu^k, \quad \text{for } 0\leq k \leq m.
\end{align}

By  Lemma \ref{beltrami_reduction}, $\bv^k \in l^{2,m+1}(\BN;H^{m+1}(\OM))$, $\bv^{k} \lvert_{\Gam } \in l^{2,m+1}(\BN;H^{m+1/2}(\Gam)) $, and $\bv^k$ solves 
\begin{subequations}\label{Beltrami_vk_sys_ART}
	\begin{align} \label{v0_1_f0Eq_ART}
		\ol{\del} \overline{v^0_{-1}}+ \del v^0_{-1}  &= \left( e^{-G}\bF \right)_{0}, \\    \label{Beltrami_v0_Feq_ART}
	\dba\bv^0 +L^2 \del\bv^0 &= L [e^{-G}\bF], \\ \label{Beltrami_vkseq_Eq_ART}
	\dba\bv^k +L^2 \del\bv^k &= L \bv^{k-1}, \quad 1 \leq k \leq m, 
\end{align}
	subject to 
	\begin{align}\label{boldgk_vk_ART}
		\bv^{k} \lvert_{\Gam }&=e^{-G}\bg^{k}, \quad \text{for } \, 0 \leq k \leq m.
	\end{align}	
\end{subequations}

Note that $L^{m+1} \bF = \bzero= \langle 0,0,...\rangle$. Since $e^{ \pm G}$ and $L$ commute,  
$$\ds L^{m+1} [e^{-G}\bF] = e^{-G} L^{m+1}\bF =  e^{-G}\bzero= \bzero.$$
Thus, 
$L^{m-k}\bv^k$ solve
\begin{subequations}\label{Shift_Beltrami_vk_sys_ART}
\begin{align} \label{Shift_Beltrami_v0_Feq_ART}
	\dba[L^{m}\bv^0] +L^2 \del[L^{m}\bv^0] &= \bzero, \\ \label{Shift_Beltrami_vkseq_Eq_ART}
	\dba[L^{m-k}\bv^k] +L^2 \del[L^{m-k}\bv^k] &= L^{m-k+1}\bv^{k-1}, \quad 1 \leq k \leq m, 
\end{align}
	subject to 
	\begin{align}\label{boldLmgk_vk_ART}
  L^{m-k}\bv^{k} \lvert_{\Gam }=e^{-G}L^{m-k}\bg^{k}, \quad  	0 \leq k \leq m.
	\end{align}	
\end{subequations}


Since the attenuation $a$ is known and the sequences  in \eqref{boldgk_uk_ART}  are known on $\Gam$ from \eqref{data_equiv_ART}, the sequences $ e^{-G} \bg^k$ in \eqref{boldgk_vk_ART} are also
determined. 

Proposition \ref{prop_estimate_delvk} with $\bg$ replaced by $e^{-G}\bg$ therein yields:
\begin{prop}\label{prop_estimate_delvk_ART}
		For $a\in C^{m+1,\mu}(\ol\OM), \mu >1/2$ given, let $ \langle \bg^0, \bg^1,   \cdots , \bg^m\rangle $ be the data as in \eqref{boldgk_uk_ART} obtained for some unknown even order $m$-tensor in $ H^{m+\frac{3}{2}}_0(\mathbf{S}^m; \OM)$.
 Then $\bg^k \in l^{2,m+\frac{3}{2}}(\BN;H^{m+\frac{1}{2}}(\Gam))$, and the  unique solution $\ds \bv^k$  of the boundary value problem \eqref{Beltrami_vk_sys_ART} 
satisfies
		\begin{align*}
			L^{m-k}	\bv^k (z)&=  \sum_{j=0}^{k} \BT^j L^{m-k+j}  [\B e^{-G}\bg^{k-j}](z), \quad z \in \OM, \; 0 \leq k \leq m,
		\end{align*}
    where   $\B,\BT$ are the operators in \eqref{BukhgeimCauchyFormula}, respectively \eqref{T_Formula}, and $e^{\pm G}$ are the operators  in \eqref{eGop_leftshift}. Moreover,
	\begin{equation*}
	\begin{aligned}
		\lnorm{ L^{m-k}   \bv^k}_{m-k,k+1}^2 	&\lesssim 
		\sum_{j=0}^{k}	\lnorm{ e^{-G} L^{m-j}  \bg^{j}}^2_{m+\frac{3}{2},j+\frac{1}{2}}, \quad 0 \leq k \leq m.
	\end{aligned} 
\end{equation*}

\end{prop}

The reconstruction method of the non-attenuated case  recovers
 $e^{-G} \bF$  via \eqref{v0_1_f0Eq_ART} and \eqref{Beltrami_v0_Feq_ART}:
\begin{align*}
    \left( e^{-G}\bF \right)_{0}&:= 2 \re \left[ \del v^0_{-1} \right]  \quad \text{and}\quad
	L [e^{-G}\bF]:= \dba\bv^0 +L^2 \del\bv^0,
\end{align*}
with the estimate 
$\ds 	\lnorm{ e^{-G} \bF}_{0,0}^2  \lesssim 
 		\sum_{j=0}^{m}	\lnorm{ e^{-G} L^{m-j}  \bg^{j}}^2_{m+\frac{3}{2},j+\frac{1}{2}}. 	$

Since $\ds  \bF = e^{G} \left[e^{-G} \bF \right]$, an application of  
 Lemma \ref{beltrami_reduction} yields $\ds \lnorm{ \bF}_{0,0}^2  
 \lesssim 
 \sum_{j=0}^{m}	\lnorm{ L^{m-j}  \bg^{j}}^2_{m+\frac{3}{2},j+\frac{1}{2}}. $
\qed

\section{Numerical inversion of the momenta Doppler transform} \label{sec:numerics}

In this section we apply the reconstruction method 
to three numerical examples in the case $m=1$ and $a \equiv 0$. Since the odd tensor case was not detailed in Section \ref{sec:Thproof_NART}, we do it here in the Doppler case.
Specifically, a  vector field $\bbf = \langle F_1, F_2\rangle$   is to be determined from its Doppler transform
\begin{align}\label{DF} 
	I^0\bbf(x,\btheta) & =\int_{-\infty}^{\infty} 
	\btheta \cdot \bbf( \Pi_{\btheta}(x)+t\btheta)  dt, 
\end{align}
and its first-moment-Doppler transform
\begin{align}\label{D1F} 
	I^1\bbf(x,\btheta) =
	\int_{-\infty}^{\infty} t
	\btheta \cdot \bbf( \Pi_{\btheta}(x)+t\btheta)  dt, \quad (x,\btheta)\in\BR^2\times\sph,
\end{align}
where $ I^0$ and $ I^1$ are  the ray transform as in \eqref{Iaf}.

For $k =0,1$, let $v^k$ be the $k$-level flux solution  of the boundary value problem \eqref{bvp_UK_transport}. 

In our inverse problem, the solution $(v^0,v^1)$ 
is unknown in $\OM$,
since $\bbf$ is unknown. However, from the momenta Doppler data $\langle I^0 \bbf,I^1 \bbf\rangle $, 
 their traces  	
$\ds 	g^k=
	\begin{cases} 
		v^k \lvert_{\Gam_{+}} &\mbox{ on } \Gam_{+},\\
		0 &\mbox{ on } \Gam_{-},  
	\end{cases}
$
are determined via \eqref{eq:gk-Ik}:
\begin{align}\label{data_equiv_1tensor}
	\langle g^0, g^1 \rangle \longleftrightarrow \langle I^0\bbf, I^1\bbf \rangle.
\end{align}

 By identifying the Fourier coefficients in \eqref{bvp_UK_transport}, the solution $v^k_{n}$'s  solve
 \begin{subequations}\label{Beltrami_m=1_sys_NART_m=1}
\begin{align}
	\label{source_syseq1}
	&\overline{\del} v^0_{0}(z)+\del v^0_{-2}(z) = f_{1}(z), \\ \label{infinite_sys_u0f1}
	&\dba v^0_{-n}(z) +\del v^0_{-n-2}(z)  =0,\qquad \qquad n\geq 1, \\ \label{Beltrami_u1Eq}
	&\ol{\del} v^1_{-n}(z) + \del v^1_{-n-2}(z) = v^{0}_{-n-1}(z), \quad n \in \BZ,
\end{align}      
and 
\begin{align}\label{trace:gkn2}
	v^k_{-n} \lvert_{\Gam }= g^k_{-n},  \quad k =0,1,
\end{align}
where 
\begin{align}\label{defn:f1}
	f_{1}:= \frac{ 1} {2} (F_{1}+ \i F_{2}).
\end{align} 
\end{subequations}

Since the vector field $\bbf$ is real valued, the solution $v^k$ of  \eqref{bvp_UK_transport}  is also  real valued, and its Fourier modes  $v^k_{-n}$'s in the angular variable occur in conjugates:
\begin{align}\label{reality_vkn}
	v^k_{n} = \ol{v^k_{-n}}, \quad \text{ for }  \; n \geq 0, \; k=0,1. 
\end{align}	
Thus, it suffices to consider the non-positive Fourier modes of $v^k$.

As before, let $\bv^k $  be the sequence valued map of the  non-positive Fourier coefficients of 
 $v^k$,  and 	$\ds \bg^k = \bv^k \lvert_{\Gam} $ be its trace. 
Then in the sequence valued map notation the boundary value problem \eqref{Beltrami_m=1_sys_NART_m=1} becomes
\begin{subequations}\label{Beltrami_m=1_sys_NART_mOdd}
	\begin{align}  \label{source_syseq11}
		\overline{\del} v^0_{0}+\del v^0_{-2} &= f_{1},  \\ \label{u0_1_f0Eq} 
		\dba [L\bv^0] +L^2 \del [L \bv^0] &= \bzero , \\  \label{Beltrami_ukseq_Eq}
		\dba\bv^1 +L^2 \del\bv^1 &= L \bv^{0}, 
	\end{align}
	subject to 
	\begin{align}\label{boldgk_vk_NART_m=1}
		\bv^{k} \lvert_{\Gam }&=\bg^{k}, \quad \text{for } \, k =0, 1. 
	\end{align}	
\end{subequations}

Note that the sequences $ \langle \bv^0, \bv^1\rangle$ of the boundary value problem \eqref{Beltrami_m=1_sys_NART_mOdd}  are unknown in $\OM$, since $f_1$ is unknown. However,  the data $ \langle \bg^0, \bg^1\rangle$ in \eqref{boldgk_vk_NART_m=1} are known on $\Gam$ from Doppler data $\langle I^0 \bbf,I^1 \bbf\rangle $ via  \eqref{data_equiv_1tensor}.

\subsection{ Reconstruction.}
Given  the Doppler data $\langle I^0 \bbf,I^1 \bbf\rangle $, 
 we  recover $f_1$ in \eqref{defn:f1} and thus $\bbf$ as follows:
\begin{figure}[ht!]
	\centering
	\includegraphics[scale=0.65]{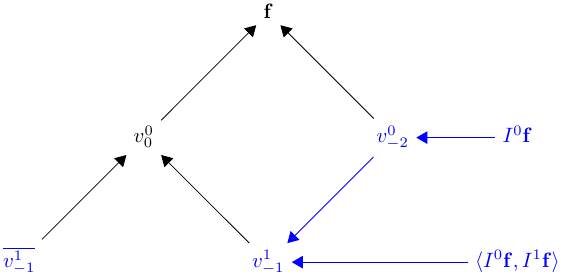}
	\caption{{Flow of the reconstruction of the vector field $\bbf$.  In the sweep down, the modes $v^0_{-2}$ and $v^1_{-1}$ colored in blue are determined layer by layer from the momenta Doppler data  $\langle I^0 \bbf,I^1 \bbf\rangle $ and the previous layer. 
			In the sweep up, also layer by layer starting from the bottom, the remaining coefficient $v^0_{0}$  is recovered. 
		}}
	\label{fig:Flow_1tensor}
\end{figure}

\begin{itemize}
	\item  {\bf Step 1 (Sweep down): The recovery of the sequence $L \bv^0 $.}

  From 	\eqref{u0_1_f0Eq}, we note that  $L\bv^0 $ is $L^2$-analytic,
and can be recovered via the Bukhgeim-Cauchy Integral formula \eqref{BukhgeimCauchyFormula}:
\begin{align}\label{uzero_mODD_B}
	L\bv^0=  \B (L\bg^0 ).
\end{align}
Componentwise,   for $n\geq 1$,
\begin{align} \label{uzero_mODD_B1}
	v^0_{-n}(z) &= \frac{1}{2\pi \i} \int_{\Gam}
	\frac{ g^0_{-n}(\zeta)}{\zeta-z}d\zeta  + \frac{1}{2\pi \i}\int_{\Gam} \left \{ \frac{d\zeta}{\zeta-z}-\frac{d \ol{\zeta}}{\ol{\zeta}-\ol{z}} \right \} \sum_{j=1}^{\infty}  
	g^0_{-n-2j}(\zeta)
	\left( \frac{\ol{\zeta}-\ol{z}}{\zeta-z} \right) ^{j},\; z\in\OM.
\end{align}

Note that the mode $v_0^0$ is not yet determined. 

\vspace{0.2cm}
	\item  {\bf Step 2 (Sweep down): The recovery of the entire  sequence $\bv^1 $.}
	\vspace{0.2cm}

Since the modes $ \langle v^0_{-1}, v^0_{-2}, v^0_{-3}, \cdots \rangle $ are now recovered in $\OM$ by \eqref{uzero_mODD_B1}, the right hand side of the non-homogeneous Bukhgeim Beltrami system \eqref{Beltrami_ukseq_Eq} is known.
The solution $\bv^1$ of  \eqref{Beltrami_ukseq_Eq} and \eqref{boldgk_vk_NART_m=1} is given by the Bukhgeim-Pompeiu formula \eqref{BP_Formula_vk}:
\begin{equation}\label{construction_v1}
	\begin{aligned}
		\bv^{1}  &=   	\B \bg^{1}+		\BT (L\bv^{0}) =   	\B \bg^{1}+		\BT [\B L\bg^0]
	\end{aligned}
\end{equation}
where the last equality uses \eqref{uzero_mODD_B}.

While the entire sequence $\bv^1$ is determined, we only need the $v^1_{-1}$ component:
  \begin{equation}\label{mODDconstruction_u1_BP}
	\begin{aligned} 
		v^1_{-1}(z) &:= \frac{1}{2\pi \i} \int_{\Gam}
		\frac{ g^1_{-1}(\zeta)}{\zeta-z}d\zeta  
		+ \frac{1}{2\pi \i}\int_{\Gam} \left \{ \frac{d\zeta}{\zeta-z}-\frac{d \ol{\zeta}}{\ol{\zeta}-\ol{z}} \right \} \sum_{j=1}^{\infty}  
		g^1_{-1-2j}(\zeta)
		\left( \frac{\ol{\zeta}-\ol{z}}{\zeta-z} \right) ^{j} \\
		& \qquad  -\frac{1}{ \pi } \sum_{j=0}^{\infty} \int_{\OM}    \frac{v^0_{-2-2j} (\zeta)  }{\zeta-z} \left( \frac{\ol{\zeta}-\ol{z}}{\zeta-z} \right) ^{j}d\xi d\eta, \qquad \zeta = \xi +\i \eta,   \quad z \in \OM.
	\end{aligned}
\end{equation}

\item {\bf Step 3 (Sweep up): The reconstruction of the Fourier mode $v^0_{0}$.}
\vspace{0.2cm}

  The real valued  Fourier mode $v^0_{0}$ is determined via equation \eqref{Beltrami_u1Eq} for $n=1$, and complex conjugate relation \eqref{reality_vkn}, by
\begin{align}\label{eq:mODD_u00}
	v^0_{0}(z) &:=  \ol \del v^1_{1}(z) +  \del v^1_{-1}(z) =2 \re \del v^1_{-1}(z), \quad z \in \OM.
\end{align}

Thus, from \eqref{uzero_mODD_B1} and \eqref{eq:mODD_u00}, the entire sequence $\bv^0 = \langle v^0_0, v^0_{-1}, \cdots \rangle$ is now determined in $\OM$.

\vspace{0.2cm}
\item	{\bf Step 4: The recovery of the vector field $\bbf$.}
\vspace{0.2cm}

From  the mode $v^0_{-2}$ in \eqref{uzero_mODD_B1} for $n=2$, 
and the mode $v^0_{0}$ in \eqref{eq:mODD_u00}, we use \eqref{source_syseq11} to recover 
\begin{align}\label{f1_defn}
	f_{1}(z) := \overline{\del} v^0_{0}(z) +\del v^0_{-2}(z), \quad z \in \OM,
\end{align}
and define the vector field inside $\OM$ by
\begin{align}\label{F_defn}
	\bbf := \langle 2 \re {f_1}, 2\im{f_1} \rangle.
\end{align}
\end{itemize}

\subsection{Numerical Implementation}
We present the results of the reconstruction in three numerical examples. To emphasize the departure from existing works recovering the solenoidal part, in the first two examples the Doppler data is simulated for two different vector fields sharing the same solenoidal part. The third example considers a rough field, with an embedded inclusion. The reconstruction from both noiseless and noisy data is performed for each example. The domain $\OM$ is  the unit disk centered at the origin and its boundary $\Gam$ is the unit circle.

Starting from a vector field $\bbf$, the data is computed by numerical integration in \eqref{DF} and \eqref{D1F} via the composite mid-point rule along lines. The data is calculated at $1{,}440$ boundary points $x\in\Gam$ of equal angular spacing, and at about $720$ equiangular outgoing directions $\btheta \in \sph$ (satisfying $x\cdot\btheta > 0$). 

To avoid an inverse crime, the reconstruction algorithm uses a different numerical path: each component of the vector field is recovered as a piecewise constant approximation on a ($1{,}750$ elements) triangular partition of $\Omega$. 
We use \eqref{uzero_mODD_B1} to compute the values of $v^0_{-n}$ at the vertices of the partition, yielding a piecewise linear approximation to $v^0_{-n}$. More precisely, if $v^0_{-n}\approx ax_1+bx_2+c$ on a triangle $\tau$,
then $\partial v^0_{-n}|_{\tau} \approx \dfrac{1}{2}(a-b\i)$ as a piecewise constant approximation required in \eqref{f1_defn}.
In contrast, the mode $v^1_{-1}$ is computed at each centroid by \eqref{mODDconstruction_u1_BP}. More precisely,
at the centroid $c$ of each triangle $\tau$,
\begin{align*}
	v^1_{-1}(c+\lambda) \approx &v^1_{-1}(c) + \partial_{x_1} v^1_{-1}(c)\lambda_1 + \partial_{x_2} v^1_{-1}(c)\lambda_2\\
	&+ \dfrac{1}{2}\partial^2_{x_1x_1} v^1_{-1}(c)\lambda_1^2 + \partial^2_{x_1x_2} v^1_{-1}(c)\lambda_1\lambda_2 + \dfrac{1}{2}\partial^2_{x_2x_2} v^1_{-1}(c)\lambda_2^2
\end{align*}
for small $\lambda=(\lambda_1,\lambda_2)$.
We write $\tau_1$, $\tau_2$, $\dotsc$, $\tau_K$ the triangles sharing vertices or edges with $\tau$. Then substituting $\lambda=c-c_k$, $k=1,2,\dotsc,K$
to the expansion, we obtain $K$ linear constraints with five unknowns $\partial_{x_1} v^1_{-1}(c)$, $\partial_{x_2} v^1_{-1}(c)$,
$\partial^2_{x_1x_1} v^1_{-1}(c)$, $\partial^2_{x_1x_2} v^1_{-1}(c)$, and $\partial^2_{x_2x_2} v^1_{-1}(c)$
required in \eqref{f1_defn} with \eqref{eq:mODD_u00} to find $\overline{\del}v^0_{0}$.
The least square method leads a unique solution to them on each $\tau$.
The boundary integrals in \eqref{uzero_mODD_B1} and \eqref{mODDconstruction_u1_BP} are approximated by the composite mid-point rule as the sum of the product of mid-point values of the integrand and arc lengths~\cite{fujiwaraSadiqTamasan20}.
Note that the singularity of the integrand of the final term
in \eqref{mODDconstruction_u1_BP} is removable 
and a conventional mid-point numerical integration rule can be applied. More precisely, we use
\begin{align*}
	\int_{\OM}    \frac{v^0_{-2-2j} (\zeta)  }{\zeta-z} \left( \frac{\ol{\zeta}-\ol{z}}{\zeta-z} \right) ^{j}d\xi d\eta
	&\approx \sum_{m} v^0_{-2-2j} (c_m) \int_{\tau_m}\frac{1}{\zeta-z} \left( \frac{\ol{\zeta}-\ol{z}}{\zeta-z} \right) ^{j}d\xi d\eta
	\\
	&= \sum_{m} v^0_{-2-2j} (c_m) \int_{-\pi}^{\pi} \rho_m(z;\varphi) e^{-(2j+1)i\varphi} d\varphi,
\end{align*}
where $\{\tau_m\}$ gives a triangulation of $\OM$,
$c_m$ is the centroid of $\tau_m$,
and $\rho_m(z;\varphi)$ is the length of the half-line starting from $z$ in the $\varphi$-direction cut by the triangle $\tau_m$;
if $z$ belongs to $\tau_m$, then $\tau_m$ is the distance between $z$ and $\partial\tau_m$ in the $\varphi$-direction (Fig.~\ref{fig:integral}).
The last integral above is 
computed by the eight-point Gauss-Legendre rule
with algebraically calculated $\rho_m$ as explained in the figure below.

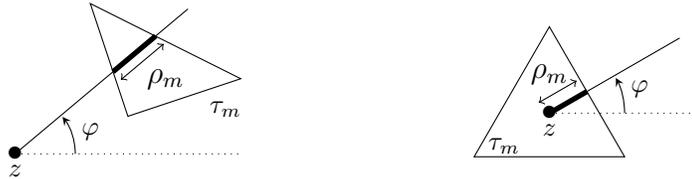
\begin{figure}[ht]
	\begin{minipage}{.12\textwidth}
		\quad
	\end{minipage}
	\begin{minipage}{.35\textwidth}
		\centering
		\begin{tikzpicture}
			\draw [dotted](0,0) node {$\bullet$} -- (3,0);
			\draw [->,>=stealth] (0,0)+(0:0.8) arc (0:38:0.8);
			\node at (1,0.3) {\footnotesize{$\varphi$}};
			\draw (0,0) node [below] {\footnotesize{$z$}} -- +(40:3);
			\draw (3,1) -- (1.5,0.5) -- (1,2) -- cycle;
			\node at (2.8,0.6) {\footnotesize{$\tau_m$}};
			
			\draw [line width=2pt] +(40:1.7) -- +(40:2.44);
			\draw [<->,xshift=3pt,yshift=-3pt] +(40:1.7) -- +(40:2.44);
			\node at (2,1) {$\rho_m$};
			
		\end{tikzpicture}
	\end{minipage}
	\begin{minipage}{.35\textwidth}
		\centering
		\begin{tikzpicture}
			\draw (0,0.58) node {$\bullet$} -- +(30:2);
			\draw [dotted] (0,0.58) node [below] {\footnotesize{$z$}} -- +(0:2);
			\draw [->,>=stealth] (0,0.58)+(0:1) arc (0:28:1);
			\node at (1.2,0.9) {\footnotesize{$\varphi$}};
			\draw (-1,0) -- (1,0) -- (0,1.73) -- cycle;
			\node at (-0.6,0.15) {\footnotesize{$\tau_m$}};
			
			\draw [line width=2pt] (0,0.58) -- +(30:0.58);
			\draw [<->,xshift=-4pt,yshift=4pt] (0,0.58) -- +(30:0.58);
			\node at (0,1.1) {$\rho_m$};
			
		\end{tikzpicture}
	\end{minipage}
	\begin{minipage}{.12\textwidth}
		\quad
	\end{minipage}
	\caption{\label{fig:integral}The length $\rho_m(z;\varphi)$ cut by the triangle $\tau_m$. The case for $z \not\in\tau_m$ (left) and that for $z \in \tau_m$ (right)}
\end{figure}

Throughout this section, the series in \eqref{uzero_mODD_B1} and 
\eqref{mODDconstruction_u1_BP} are truncated up to $256$ Fourier modes. The truncation index not only controls the accuracy, but also plays a regularizing role in stability.

In the examples below, the relative error between a reconstructed vector field $\bbf_{\text{recon}}$ and the exact  one are in the $L^2$ sense:
\begin{align}
	\norm{\bbf_{\text{recon}} -\bbf}_{\text{rel}} = \frac{\norm{\bbf_{\text{recon}} - \bbf}_2}{\|\bbf\|_2}.
\end{align}
Similarly, the relative error in the data is in the $L^2$ sense.

All numerically reconstructed results are calculated in the double precision arithmetic
on AMD EPYC 7643 with 96 threads OpenMP parallel computations.

\begin{example}\label{ex:KB}	
	We consider first the vector field 
	\begin{align}\label{eq:exampleKB}
		\bbf(x) 	&= \nabla \left( \sin\pi|x|^2 \right) + \bbf^s(x),  
	\end{align}
	where the solenoidal part 
	\begin{equation}\label{eq:Fsdefn}
		\bbf^s(x) 	= \begin{pmatrix} 2x_1x_2\cos|x|^2 + \cos (6x_1 x_2) - 6x_1x_2\sin(6x_1x_2)\\
			-\sin|x|^2 - 2x_1^2\cos|x|^2 + 6x_2^2\sin(6x_1x_2)
		\end{pmatrix};
	\end{equation}
	see \cite{kazantsevBukhgeimJr07} and Figure~\ref{fig:exampleKB:exact} below.
\end{example}
	
	{\bf {Example~\ref{ex:KB}(a) - Noiseless data:}}
	For the vector field $\bbf$ in \eqref{eq:exampleKB}, the simulated data $(I^0\bbf,I^1\bbf)$ is illustrated in Figure~\ref{fig:exampleKB:meas}, where crosses ($\times$) depict a few boundary nodes $x\in\Gam$, while the
	red and blue curves are $\{x+|I^j\bbf(x,\btheta)|\btheta \:;\: \btheta \in \sph, x\cdot\btheta > 0\}$, $j=0,1$. Also, for illustration purposes, the radial direction is shrunk by $1/5$-th.
	
	Note that $I^0\bbf,$ and $I^1\bbf,$ are not always positive. To differentiate the sign, the positive and negative parts are drawn in red, respectively in blue.
	Since only outgoing signals are measured (while the incoming flow is zero at the boundary) signals are depicted outside $\Omega$ only.
	\begin{figure}[h]
		\begin{minipage}{.39\textwidth}
			\centering
			\includegraphics[width=.65\textwidth,bb=75 20 270 210]{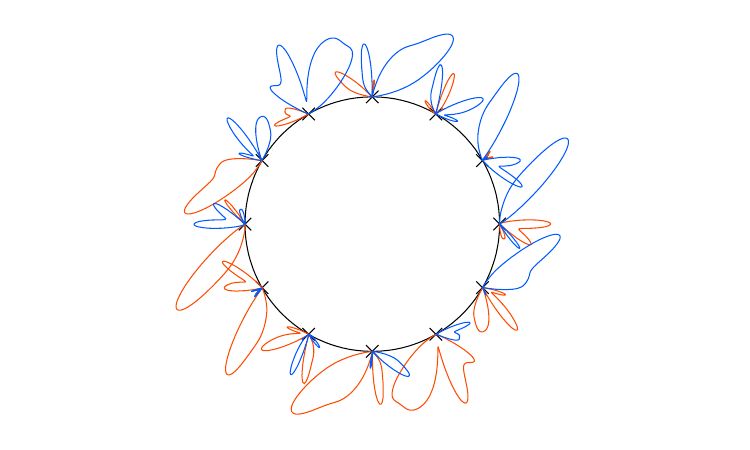}
		\end{minipage}
		\hfill
		\begin{minipage}{.2\textwidth}
			\includegraphics[width=\textwidth,bb=75 10 260 190]{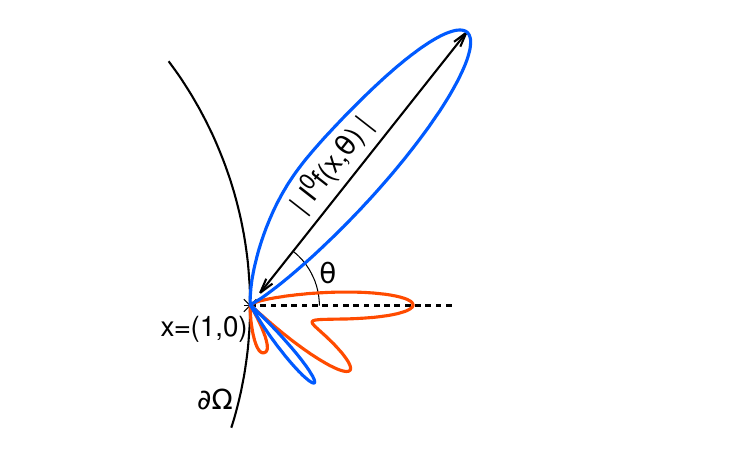}
		\end{minipage}
		\hfill
		\begin{minipage}{.39\textwidth}
			\includegraphics[width=.65\textwidth,bb=75 20 270 210]{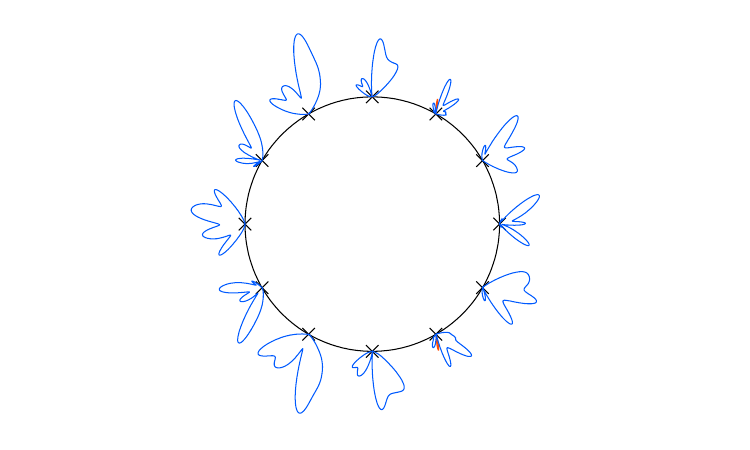}
		\end{minipage}
		\caption{\label{fig:exampleKB:meas}
			Simulated data for $\bbf$ in \eqref{eq:exampleKB}: $I^0\bbf$ (left) with its magnification at $x=(1,0)$ (middle),
			and $I^1\bbf$ (right). The crosses ($\times$) are some data collection points at the boundary, while the red and blue curves represent 
			$I^j\bbf(x,\btheta)$, $j=0,1$ in polar coordinates $\bigl(|I^j\bbf(x,\btheta)|,\btheta\bigr)$ centered at the respective boundary point $x\in\Gam$.
			The radial direction is shrunk by $1/5$-th for illustration purposes.
		}
	\end{figure}  
	
	The numerically reconstructed result shown in Figure~\ref{fig:exampleKB:tom} has a relative error of $18.1\%$. The total elapsed time in the reconstruction is approximately 10 seconds.

	\begin{figure}[ht]
		\centering
		\begin{minipage}{.3\textwidth}
			\includegraphics[width=\textwidth,bb=45 0 300 215,clip]{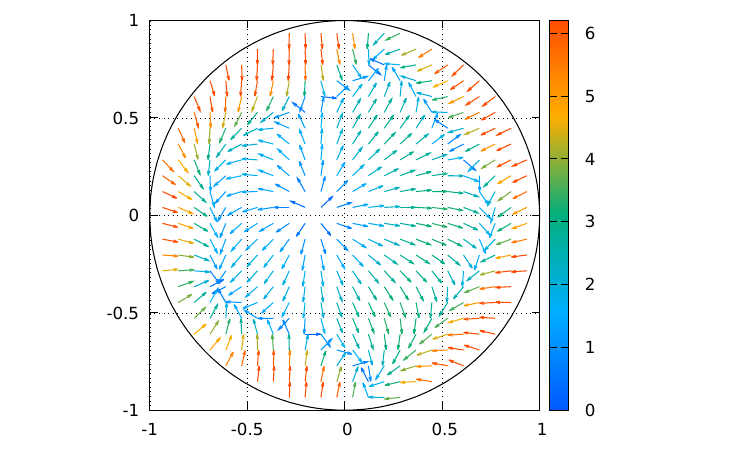}
		\end{minipage}
		\hfill
		\begin{minipage}{.33\textwidth}
			\includegraphics[width=\textwidth,bb=120 45 280 170]{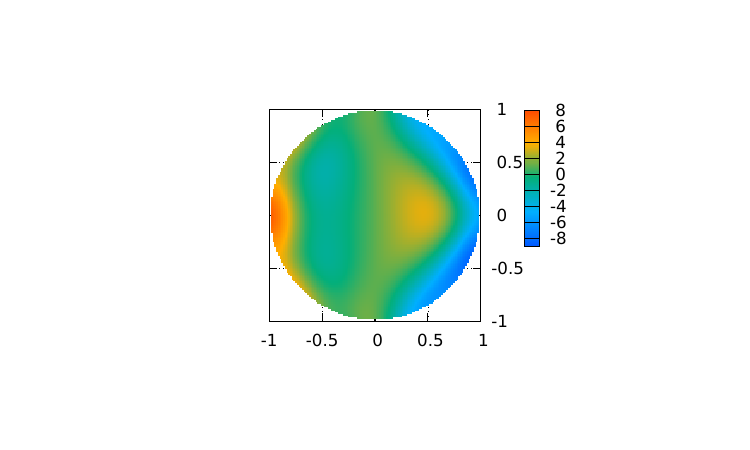}
		\end{minipage}
		\hfill
		\begin{minipage}{.33\textwidth}
			\includegraphics[width=\textwidth,bb=120 45 280 170]{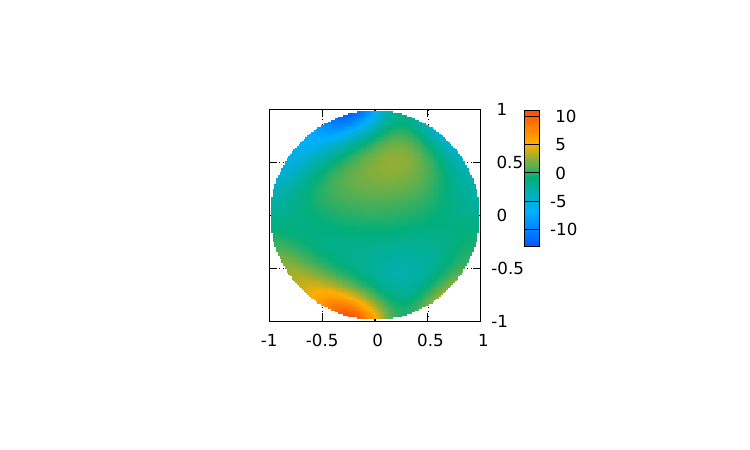}
		\end{minipage}
		\caption{\label{fig:exampleKB:exact}Exact vector field $\bbf =  \spr{F_1}{F_2}$ in \eqref{eq:exampleKB} (left), its first component $F_1$ (middle) and its second component $F_2$ (right).}
	\end{figure}
	\begin{figure}[ht]
		\begin{minipage}{.3\textwidth}
			\includegraphics[width=\textwidth,bb=45 0 300 215,clip]{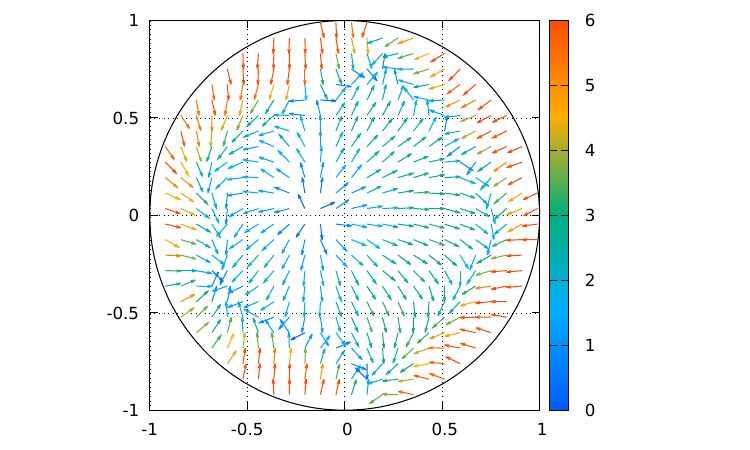}
		\end{minipage}
		\hfill
		\begin{minipage}{.33\textwidth}
			\includegraphics[width=\textwidth,bb=120 45 280 170]{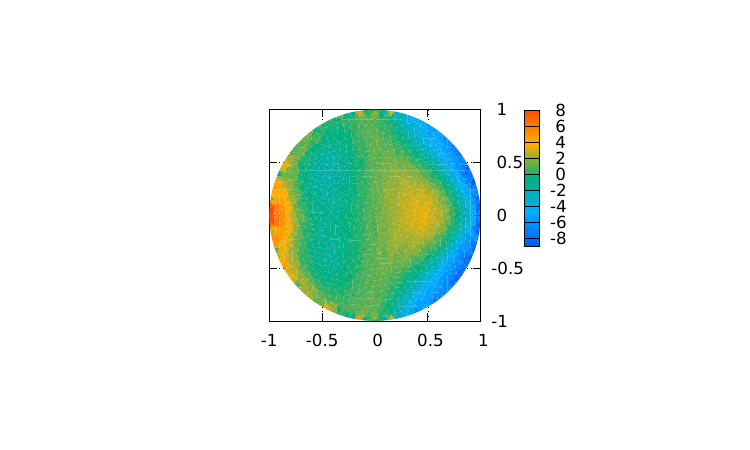}
		\end{minipage}
		\hfill
		\begin{minipage}{.33\textwidth}
			\includegraphics[width=\textwidth,bb=120 45 280 170]{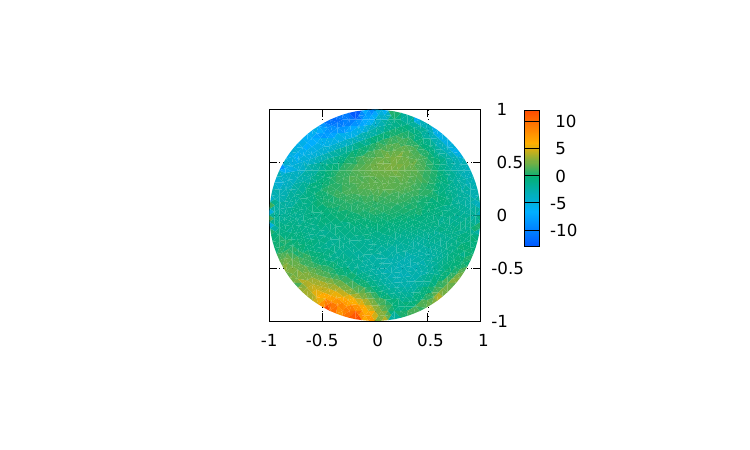}
		\end{minipage}
		\caption{\label{fig:exampleKB:tom}Numerical reconstruction from noiseless data: The vector field $\bbf_{0,\text{recon}}$ (left),
			its first component $F_1$ (middle) and its second component $F_2$ (right).
		}
	\end{figure}


{\bf {Example~\ref{ex:KB}(b) - Perturbed data within the range:}}
To illustrate the stability estimate in Theorem \ref{Th_main}, we first consider the case of data perturbed within the range. To generate such a data we solve the forward problem by \eqref{DF} and \eqref{D1F} for a perturbed vector field $\bbf_\epsilon = \bbf + \epsilon$, for some smooth vector field $\epsilon$ in $\Omega$. 
Figure~\ref{fig:exampleKB:range} below shows the reconstruction $\bbf_{\epsilon,\text{recon}}$ from this data. In this example, the relative error in the data for $I^0\bbf$ is  $5.52\%$ and
for $I^1\bbf$ is $4.48\%$, while the relative error in the reconstruction is $30.0\%$. 
\begin{figure}[ht]
	\begin{minipage}{.3\textwidth}
		\includegraphics[width=\textwidth,bb=45 0 300 215,clip]{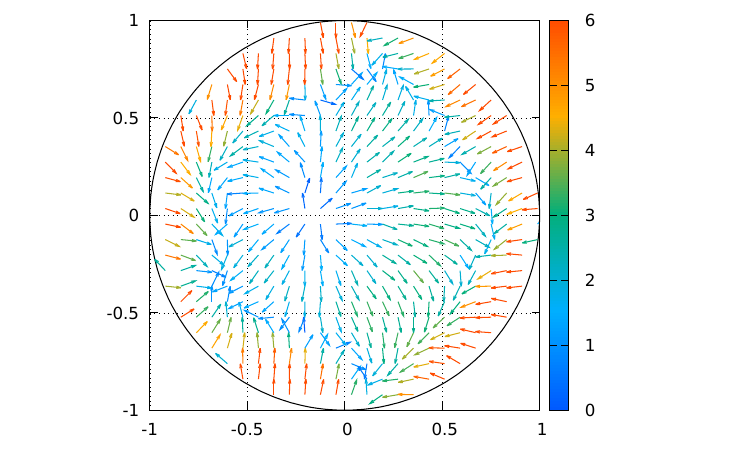}
	\end{minipage}
	\hfill
	\begin{minipage}{.33\textwidth}
		\includegraphics[width=\textwidth,bb=120 45 280 170]{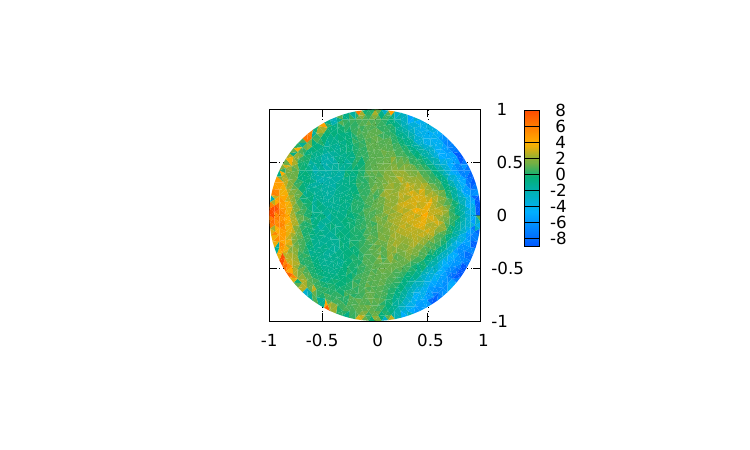}
	\end{minipage}
	\hfill
	\begin{minipage}{.33\textwidth}
		\includegraphics[width=\textwidth,bb=120 45 280 170]{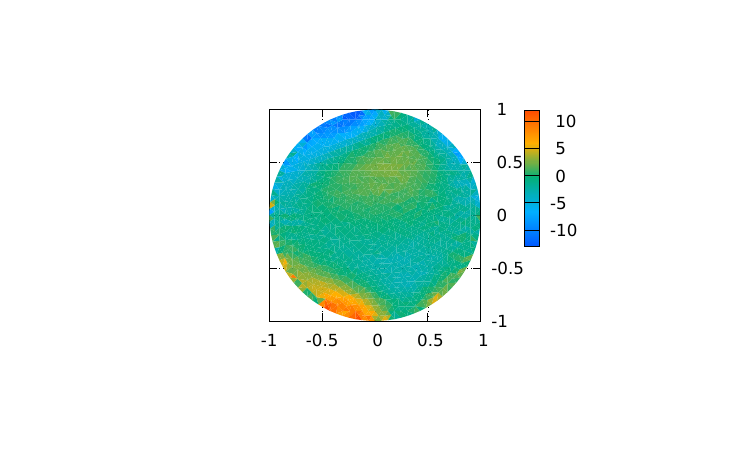}
	\end{minipage}
	\caption{\label{fig:exampleKB:range}
		Numerical reconstruction from perturbed data within the range
		($5.52\%$ relative error in $I^0\bbf$ and $4.48\%$ in $I^1\bbf$). The reconstructed field $\bbf_{\epsilon,\text{recon}} = \langle F_1, F_2 \rangle $ (left) and its components $F_1$ (middle) and $F_2$ (right) has $30.0\%$ relative error.}
\end{figure}  



{\bf {Example~\ref{ex:KB}(c) - Noisy data :}}	
To assess the robustness of the method, we consider the same vector field as in \eqref{eq:exampleKB},
where the data is corrupted with an additive random error. Specifically,  $I^0\bbf$ now contains 
about  $5.88\%$ relative error, while $I^1\bbf$ contains 
$4.34\%$ relative error, which are at the same level as in the previous example; see 
Figure~\ref{fig:exampleKB:witherror:meas} for an illustration.
\begin{figure}[h]
	\begin{minipage}{.48\textwidth}
		\centering
		\includegraphics[width=.55\textwidth,bb=75 20 270 210]{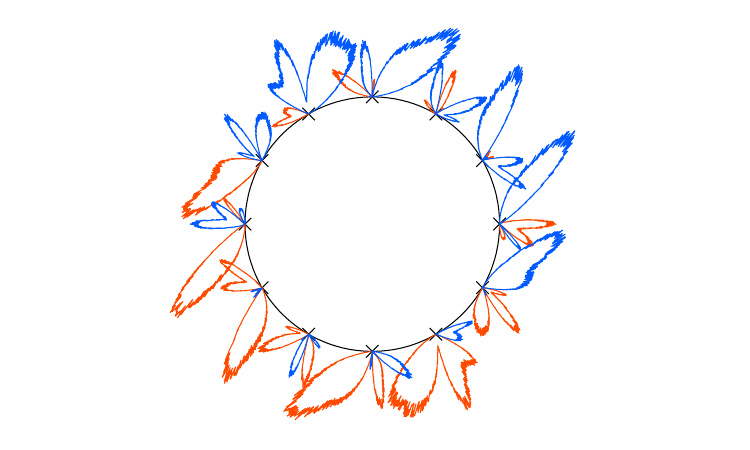}
	\end{minipage}
	\hfill
	\begin{minipage}{.48\textwidth}
		\includegraphics[width=.55\textwidth,bb=75 20 270 210]{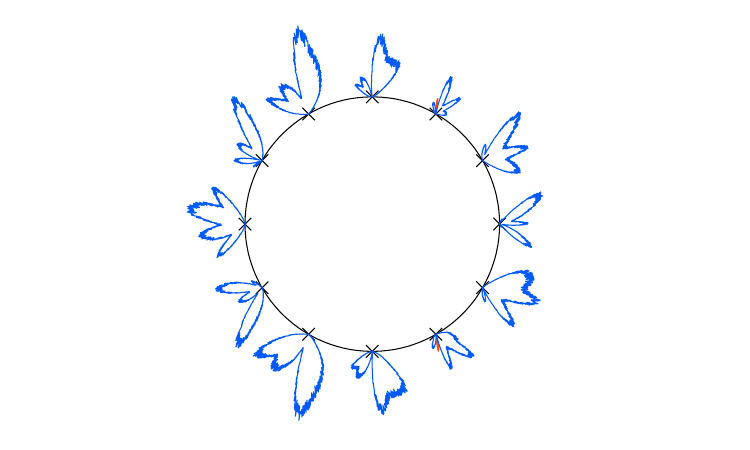}
	\end{minipage}
	\caption{\label{fig:exampleKB:witherror:meas}Noisy data $I^0\bbf$ (left) with $5.88\%$  error and $I^1\bbf$ (right) with $4.34\%$ error.}
\end{figure}  

The reconstructed vector field $\bbf_{\text{recon}}$ shown in Figure~\ref{fig:exampleKB:witherror}, contains approximately $54.6\%$ relative error.

\begin{figure}[ht]
\begin{minipage}{.3\textwidth}
	\includegraphics[width=\textwidth,bb=45 0 300 215,clip]{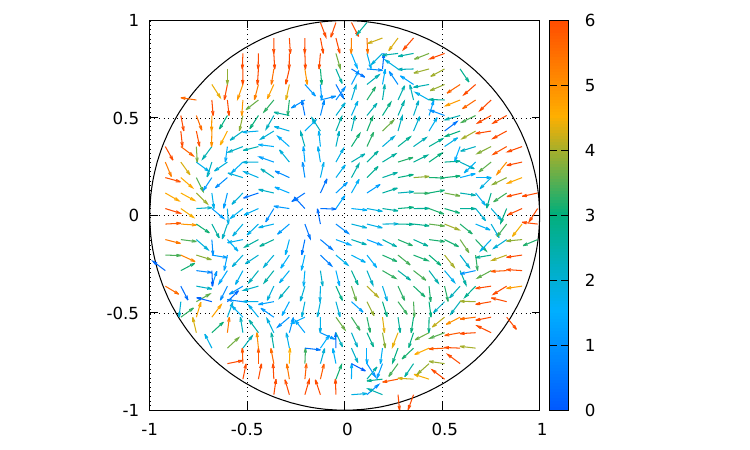}
\end{minipage}
\hfill
\begin{minipage}{.33\textwidth}
	\includegraphics[width=\textwidth,bb=120 45 280 170]{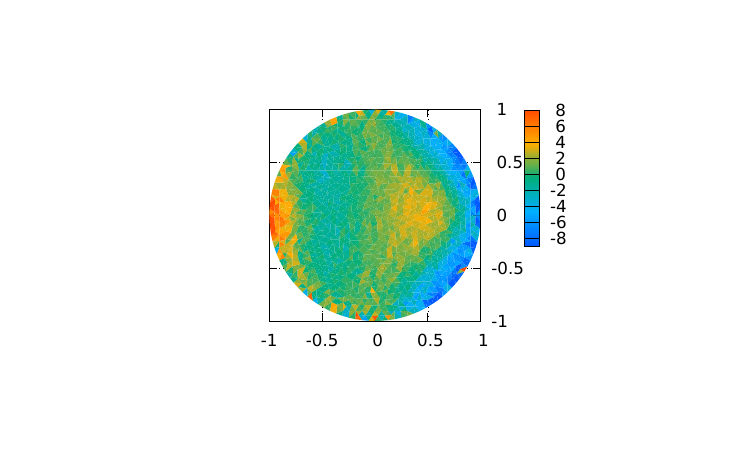}
\end{minipage}
\hfill
\begin{minipage}{.33\textwidth}
	\includegraphics[width=\textwidth,bb=120 45 280 170]{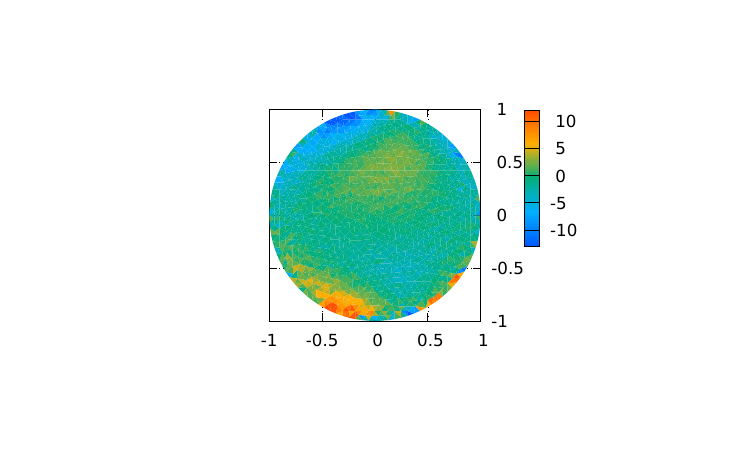}
\end{minipage}
\caption{\label{fig:exampleKB:witherror}
	Numerical reconstruction from noisy data ($5.88\%$ relative error in $I^0\bbf$ and  $4.34\%$ in $I^1\bbf$). The reconstructed field $\bbf_{\text{recon}} = \langle F_1, F_2 \rangle $ (left) and its components $F_1$ (middle) and $F_2$ (right) has $54.6\%$ relative error.}
\end{figure}  

\begin{example}
\label{ex:Atan}
We consider next  the vector field
\begin{align} \label{eq:exampleAtan}
	\bbf(x) = \nabla \left( \arctan \frac{x_2}{2+x_1} \right ) +  \bbf^s(x) 
\end{align}
with the same solenoidal part $\bbf^s$ as in \eqref{eq:Fsdefn}.
\end{example}

\begin{figure}[h]
	\begin{minipage}{.48\textwidth}
		\centering
		\includegraphics[width=.5\textwidth,bb=75 20 270 210]{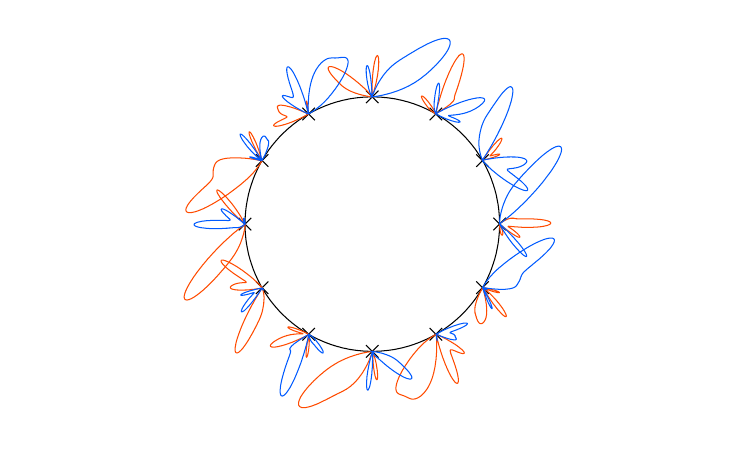}
	\end{minipage}
	\hfill
	\begin{minipage}{.48\textwidth}
		\includegraphics[width=.5\textwidth,bb=75 20 270 210]{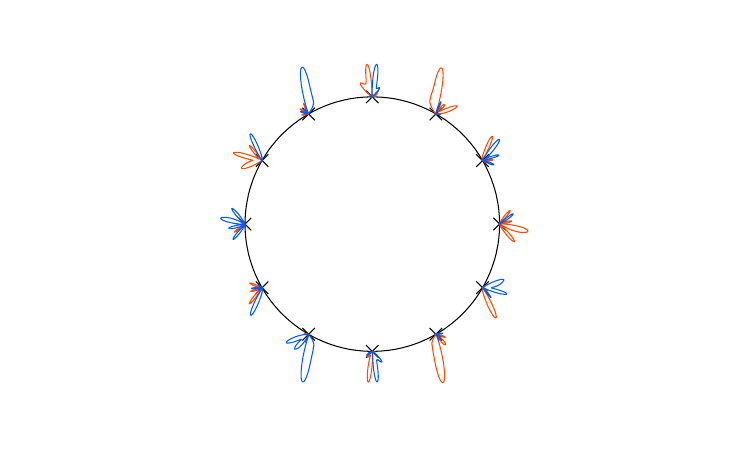}
	\end{minipage}
	\caption{\label{fig:exampleAtan:meas}
		Simulated noiseless data $I^0\bbf$ (left) and $I^1\bbf$ (right) for $\bbf$ in \eqref{eq:exampleAtan}.
		The crosses ($\times$) are some data collection points at the boundary, while the red and blue curves represent 
		$I^j\bbf(x,\btheta)$, $j=0,1$ in polar coordinates $\bigl(|I^j\bbf(x,\btheta)|,\btheta\bigr)$ centered at the respective boundary point $x\in\Gam$.
	}
\end{figure}  
\begin{figure}[h]
	\centering
	\begin{minipage}{.3\textwidth}
		\includegraphics[width=\textwidth,bb=45 0 300 215,clip]{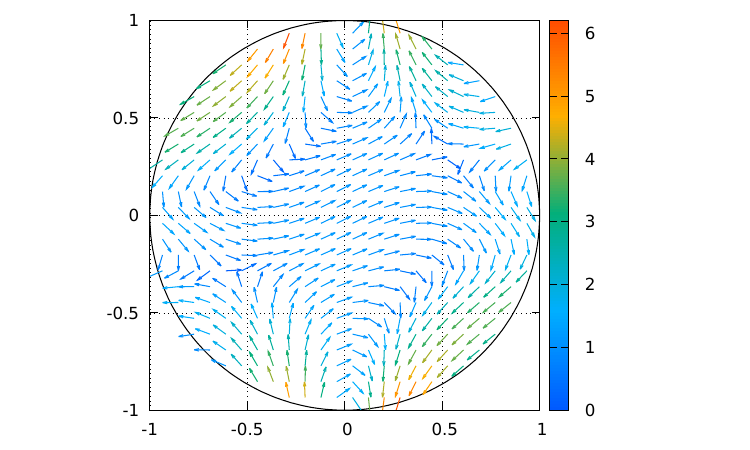}
	\end{minipage}
	\hfill
	\begin{minipage}{.33\textwidth}
		\includegraphics[width=\textwidth,bb=120 45 280 170]{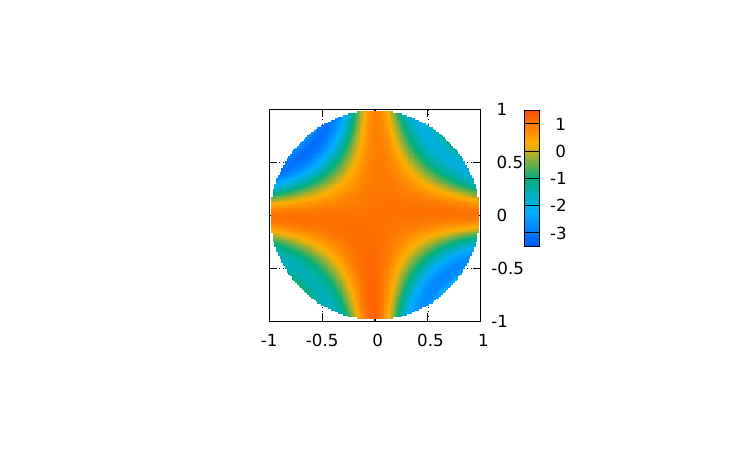}
	\end{minipage}
	\hfill
	\begin{minipage}{.33\textwidth}
		\includegraphics[width=\textwidth,bb=120 45 280 170]{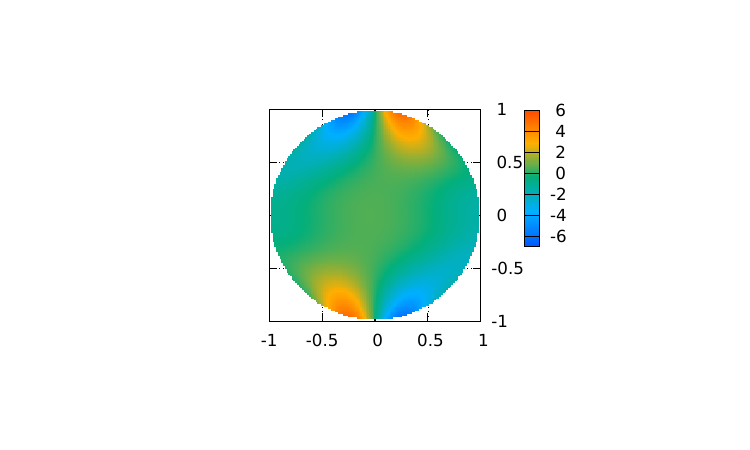}
	\end{minipage}
	\caption{\label{fig:exampleAtan:exact}Exact vector field $\bbf =  \spr{F_1}{F_2}$ in  \eqref{eq:exampleAtan} (left),  its first component $F_1$ (middle) and its second component $F_2$ (right).}
\end{figure}
\begin{figure}[h]
	\begin{minipage}{.3\textwidth}
		\includegraphics[width=\textwidth,bb=45 0 300 215,clip]{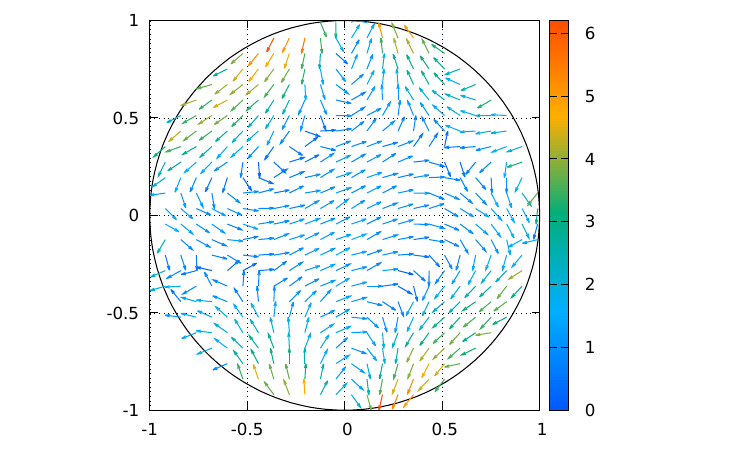}
	\end{minipage}
	\hfill
	\begin{minipage}{.33\textwidth}
		\includegraphics[width=\textwidth,bb=120 45 280 170]{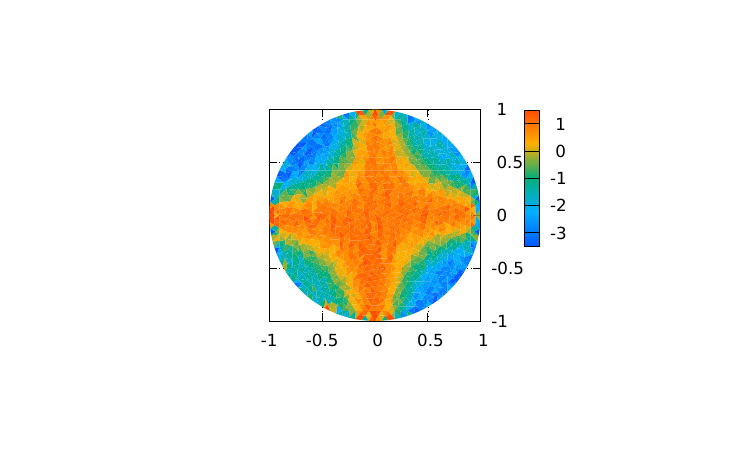}
	\end{minipage}
	\hfill
	\begin{minipage}{.33\textwidth}
		\includegraphics[width=\textwidth,bb=120 45 280 170]{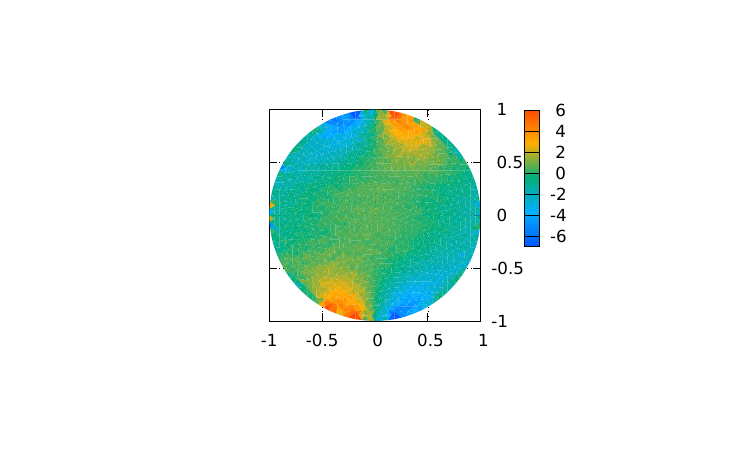}
	\end{minipage}
	\caption{\label{fig:exampleAtan}Numerically reconstructed vector field from noiseless Doppler data:
		vector field $\bbf_{0,\text{recon}} = \langle F_1, F_2 \rangle $ (left) and its components $F_1$ (middle) and $F_2$ (right).}
\end{figure}  

{\bf {Example~\ref{ex:Atan}(a) - Noiseless data:}}
The  vector field in \eqref{eq:exampleAtan} is
depicted in Figure~\ref{fig:exampleAtan:exact},  while its corresponding simulated Doppler data is shown in Figure~\ref{fig:exampleAtan:meas}. The numerically reconstructed vector field and its components  are exhibited in Figure~\ref{fig:exampleAtan} having $31.1\%$  relative error.

{\bf {Example~\ref{ex:Atan}(b) - Perturbed data within the range:}}
In this case, the generated perturbed data $\bbf_\epsilon$ depicted in Figure~\ref{fig:exampleAtan:range:meas}
has $5.02\%$ relative error in $I^0\bbf$ and
$6.04\%$ relative error in $I^1\bbf$. The numerical reconstruction of $\bbf_{\epsilon,\text{recon}}$ in Figure~\ref{fig:exampleAtan:range}
has $45.9\%$ relative error.
\begin{figure}[h]
\begin{minipage}{.48\textwidth}
	\centering
	\includegraphics[width=.55\textwidth,bb=75 20 270 210]{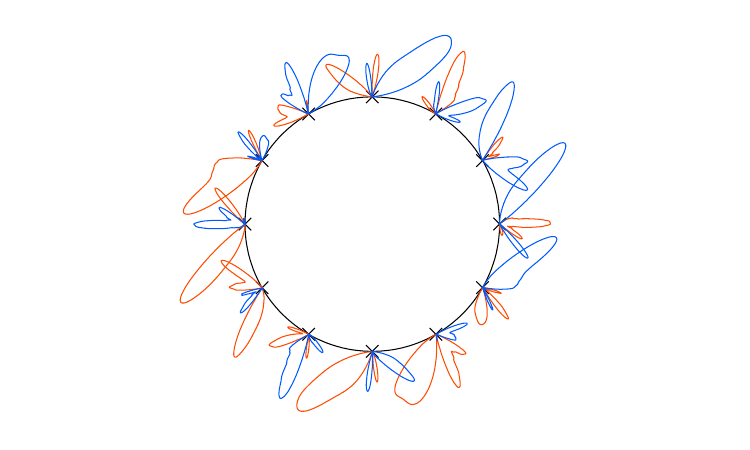}
\end{minipage}
\hfill
\begin{minipage}{.48\textwidth}
	\includegraphics[width=.55\textwidth,bb=75 20 270 210]{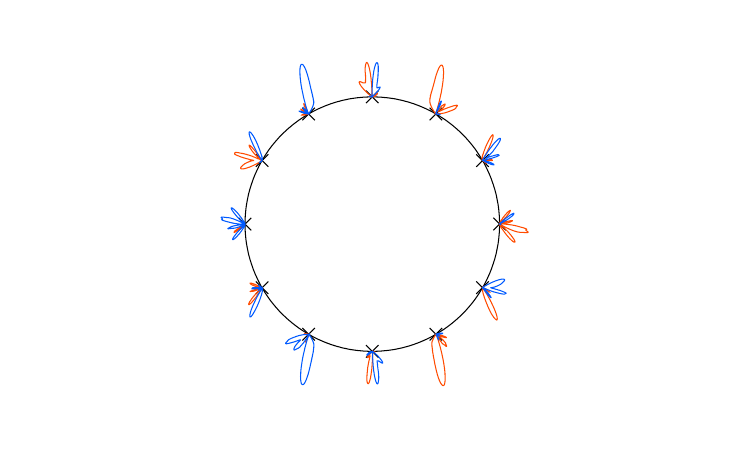}
\end{minipage}
\caption{\label{fig:exampleAtan:range:meas}Data $I^0\bbf_\epsilon$ (left) with $5.02\%$ error and $I^1\bbf_\epsilon$ (right) with $6.04\%$ error, which is considered as measurement data with noise in range}
\end{figure}  
\begin{figure}[ht]
\begin{minipage}{.3\textwidth}
	\includegraphics[width=\textwidth,bb=45 0 300 215,clip]{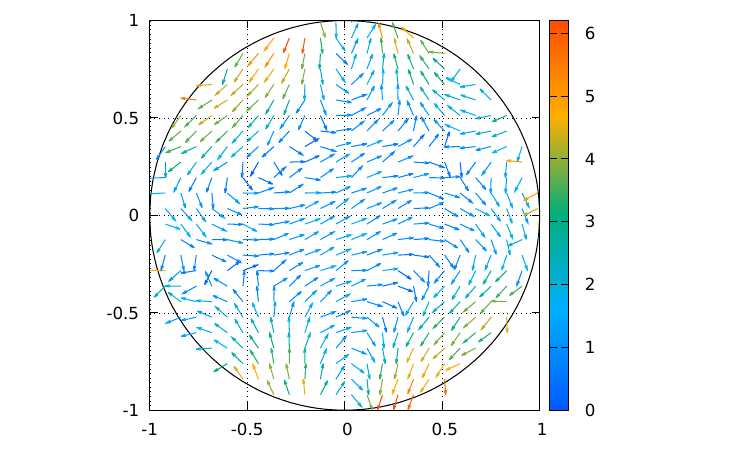}
\end{minipage}
\hfill
\begin{minipage}{.33\textwidth}
	\includegraphics[width=\textwidth,bb=120 45 280 170]{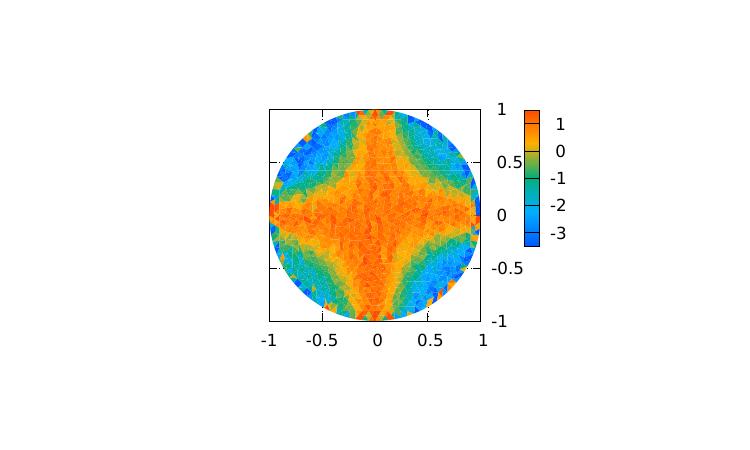}
\end{minipage}
\hfill
\begin{minipage}{.33\textwidth}
	\includegraphics[width=\textwidth,bb=120 45 280 170]{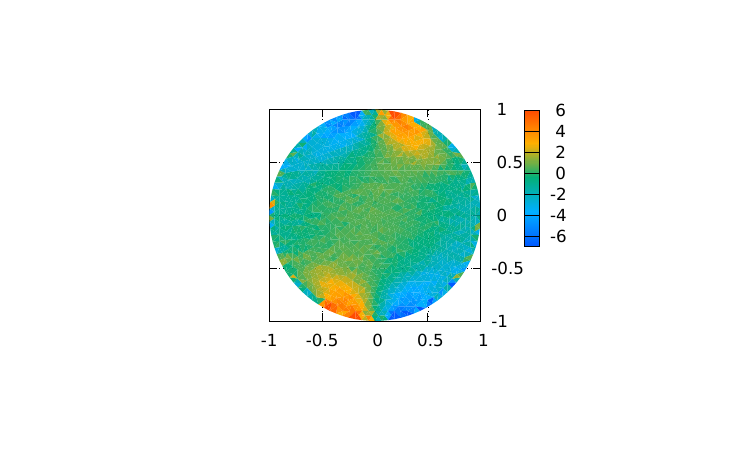}
\end{minipage}
\caption{\label{fig:exampleAtan:range}
	Numerical reconstruction from perturbed data within the range  ($5.02\%$ relative error in $I^0\bbf$ and $6.04\%$ in $I^1\bbf$).  The reconstructed field $\bbf_{\epsilon,\text{recon}} = \langle F_1, F_2 \rangle $ (left) and its components $F_1$ (middle) and $F_2$ (right) has $45.9\%$ relative error.}
\end{figure}  

{\bf {Example~\ref{ex:Atan}(c) - Noisy data:}}
We consider the same vector field as in \eqref{eq:exampleAtan},
however  the data is corrupted with additive random  errors:
$6.03\%$ relative error in $I^0\bbf$, and 
$5.04\%$  in $I^1\bbf$. 

The numerical reconstruction results are shown in   Figure~\ref{fig:exampleAtan:witherror}. The reconstructed vector field $\bbf_{\text{recon}}$ has $71.3\%$ relative  error.

\begin{figure}[h]
\begin{minipage}{.3\textwidth}
	\includegraphics[width=\textwidth,bb=45 0 300 215,clip]{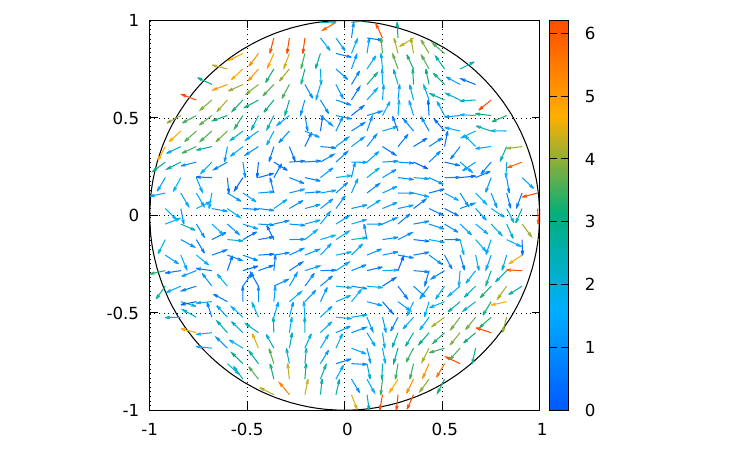}
\end{minipage}
\hfill
\begin{minipage}{.33\textwidth}
	\includegraphics[width=\textwidth,bb=120 45 280 170]{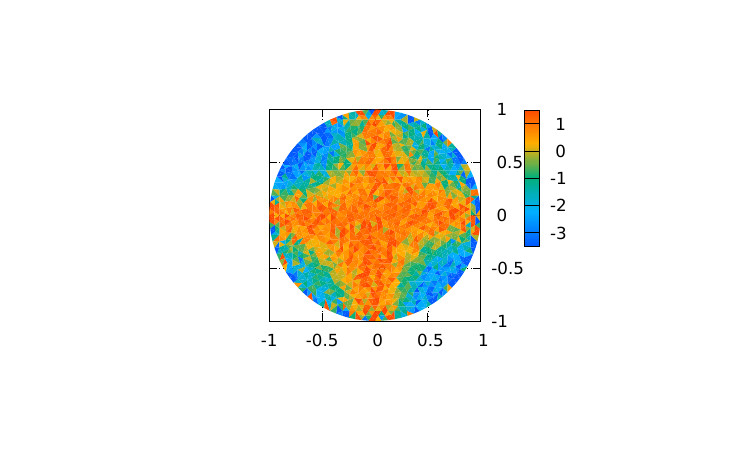}
\end{minipage}
\hfill
\begin{minipage}{.33\textwidth}
	\includegraphics[width=\textwidth,bb=120 45 280 170]{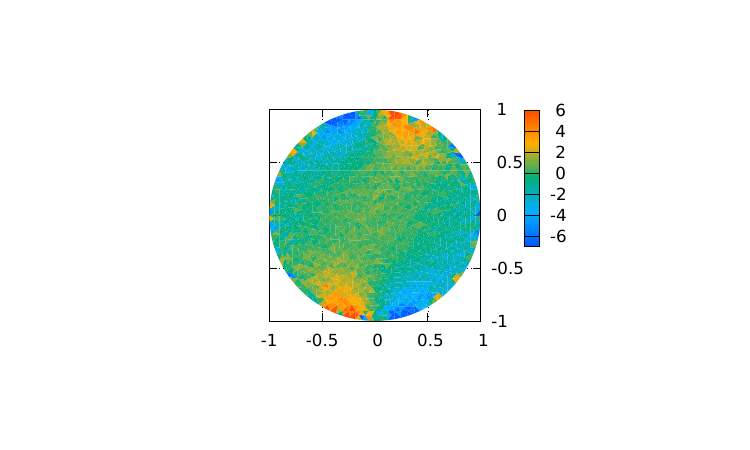}
\end{minipage}
\caption{\label{fig:exampleAtan:witherror}
	Numerical reconstruction from noisy data ($6.03\%$ relative error in $I^0\bbf$ and $5.04\%$  in $I^1\bbf$). The reconstructed field $\bbf_{\text{recon}} = \langle F_1, F_2 \rangle $ (left) and its components $F_1$ (middle) and $F_2$ (right) has $71.3\%$ relative error.}
\end{figure}  

In Table \ref{tbl:KBerrors} below, we summarize the level of error obtained in the examples.  The reconstruction error in Example \ref{ex:KB}(b) ($30.0\%$), respectively, Example \ref{ex:Atan}(b) ($45.9\%$) obtained from the perturbed data within the range reflects the instability of our method due to twice differentiation. The reconstruction error 
in Example \ref{ex:KB}(c) ($54.6\%$), respectively, Example \ref{ex:Atan}(c) ($71.3\%$) obtained from (an additive random error) noisy data is also due to the ill-posedness (non-existence) specific to inverting data outside the range.
\begin{table}[h]
\caption{\label{tbl:KBerrors}Differences between noiseless data, perturbed data in the range and  additive random noise in the first two examples.}
\centering
\begin{tabular}{l|c|c|c|c|c|c}
	\toprule
	Relative error in
	& 	Ex~\ref{ex:KB} (a) 
	& 	Ex~\ref{ex:KB} (b)
	 & Ex~\ref{ex:KB} (c) 
	 & 	Ex~\ref{ex:Atan} (a)
	& 	Ex~\ref{ex:Atan} (b) & Ex~\ref{ex:Atan} (c) 
	\\	
	\midrule
	\midrule
	$I^0\bbf$  &$0\%$ & $5.52\%$  & $5.88\%$ &$0\%$  & $5.02\%$  & $6.03\%$ \\
	\midrule
	$I^1\bbf$   &$0\%$& $4.48\%$  & $4.34\%$  
	&$0\%$  	& $6.04\%$    & $5.04\%$\\
	\midrule
	Reconstruction  &$18.1\%$ & $30.0\%$  & $54.6\%$ 
	& $31.1\%$& $45.9\%$  & $71.3\%$\\
	\bottomrule  
\end{tabular}
\end{table}


\begin{example}
\label{ex:SQ}
Let $S = [-0.25,0.75]\times[-0.5,0.5]$ be an off-centered square and
denote by $\chi_S$ its characteristic function.
Then we consider a vector field supported by $S$:
\begin{equation} \label{eq:exampleSQ}
  \bbf(x) = \chi_S(x)
  \begin{pmatrix}
    (1-(2x_1-0.5)^2)(1-4x_2^2) \\
    \cos ((2x_1+0.5)\pi)\cos(2\pi x_2)
  \end{pmatrix}.
\end{equation}

\begin{figure}[h]
	\begin{minipage}{.48\textwidth}
		\centering
		\includegraphics[width=.55\textwidth,bb=75 20 270 210]{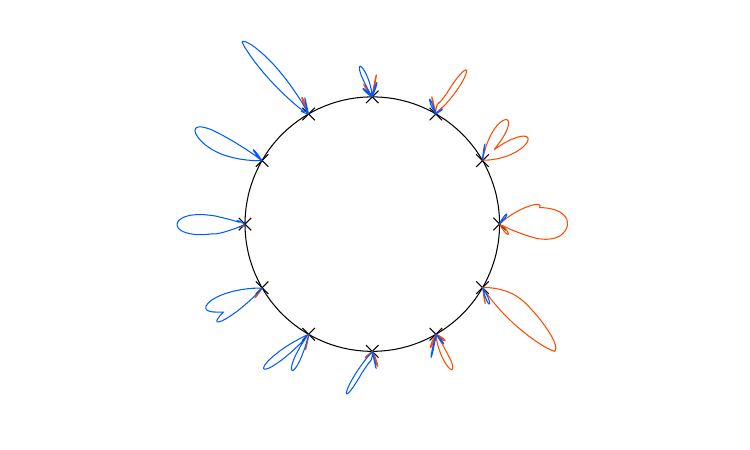}
	\end{minipage}
	\hfill
	\begin{minipage}{.48\textwidth}
		\includegraphics[width=.55\textwidth,bb=75 20 270 210]{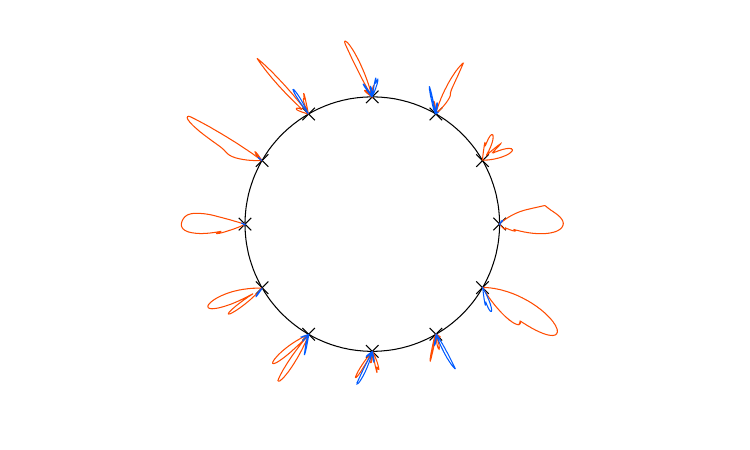}
	\end{minipage}
	\caption{\label{fig:exampleSQ:meas}
		Simulated noiseless data $I^0\bbf$ (left) and $I^1\bbf$ (right) for $\bbf$ in \eqref{eq:exampleSQ}.
		The crosses ($\times$) are some data collection points at the boundary, while the red and blue curves represent 
		$I^j\bbf(x,\btheta)$, $j=0,1$ in polar coordinates $\bigl(|I^j\bbf(x,\btheta)|,\btheta\bigr)$ centered at the respective boundary point $x\in\Gam$.
	}
\end{figure}  
\begin{figure}[h]
	\centering
	\begin{minipage}{.3\textwidth}
		\includegraphics[width=\textwidth,bb=45 0 300 215,clip]{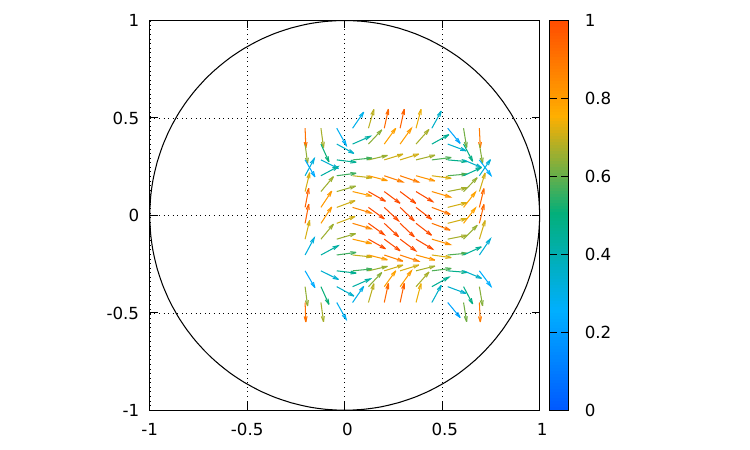}
	\end{minipage}
	\hfill
	\begin{minipage}{.33\textwidth}
		\includegraphics[width=\textwidth,bb=120 45 280 170]{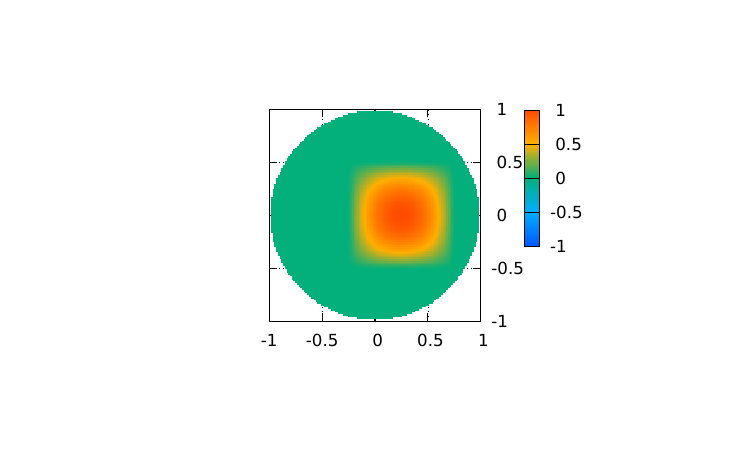}
	\end{minipage}
	\hfill
	\begin{minipage}{.33\textwidth}
		\includegraphics[width=\textwidth,bb=120 45 280 170]{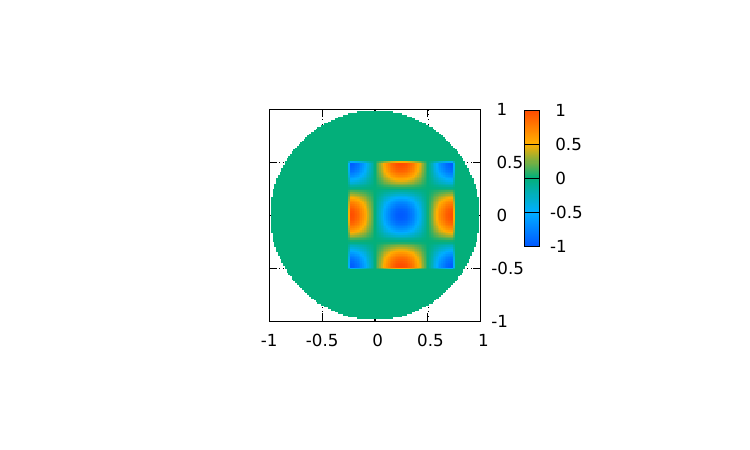}
	\end{minipage}
	\caption{\label{fig:exampleSQ:exact}Exact vector field $\bbf$ in  \eqref{eq:exampleSQ} (left),  its first component $F_1$ (middle) and its second component $F_2$ (right).}
\end{figure}
\begin{figure}[h]
	\begin{minipage}{.3\textwidth}
		\includegraphics[width=\textwidth,bb=45 0 300 215,clip]{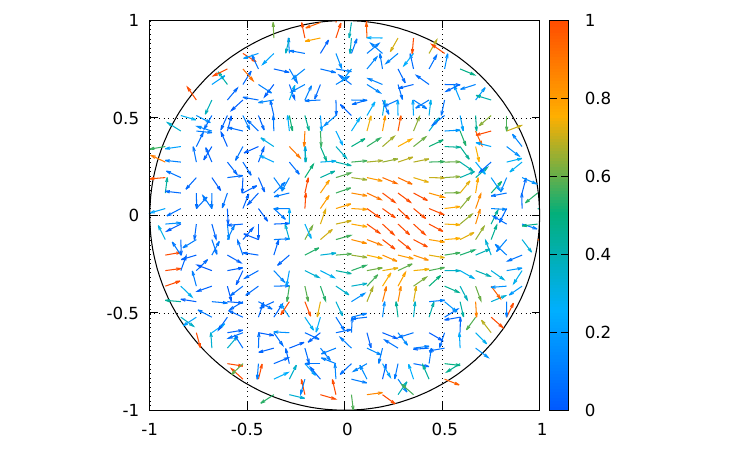}
	\end{minipage}
	\hfill
	\begin{minipage}{.33\textwidth}
		\includegraphics[width=\textwidth,bb=120 45 280 170]{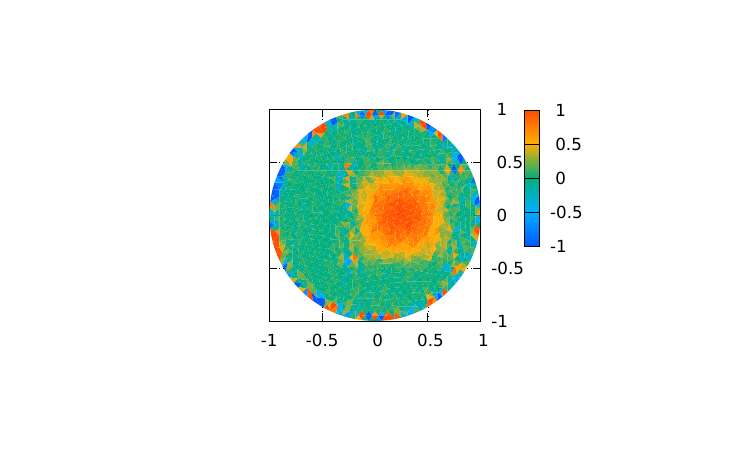}
	\end{minipage}
	\hfill
	\begin{minipage}{.33\textwidth}
		\includegraphics[width=\textwidth,bb=120 45 280 170]{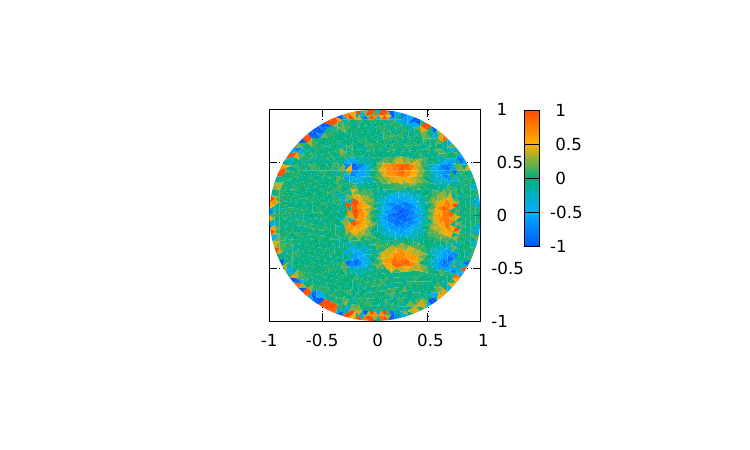}
	\end{minipage}
	\caption{\label{fig:exampleSQ}Numerically reconstructed vector field from noiseless Doppler data:
		vector field $\bbf_{0,\text{recon}} = \langle F_1, F_2 \rangle $ (left) and its components $F_1$ (middle) and $F_2$ (right).}
\end{figure}  
\end{example}

{\bf {Example~\ref{ex:SQ}(a) - Noiseless data:}}
The  vector field in \eqref{eq:exampleSQ} is given
in Figure~\ref{fig:exampleSQ:exact},  while its corresponding simulated Doppler data is shown in Figure~\ref{fig:exampleSQ:meas}. The numerically reconstructed vector field and its components  are exhibited in Figure~\ref{fig:exampleSQ}. 

{\bf {Example~\ref{ex:SQ}(b) - Noisy data:}}
We consider the same vector field as in \eqref{eq:exampleSQ}, however  the data is corrupted with additive random  errors: $5.52\%$ relative error in $I^0\bbf$, and $4.61\%$  in $I^1\bbf$. 
The numerical reconstruction results are shown in Figure~\ref{fig:exampleSQ:witherror}. 
Even though $\bbf$ has discontinuity along $\partial S$, the support of $\bbf$ can be clearly distinguished.

Fig.~\ref{fig:exampleSQ} and Fig.~\ref{fig:exampleSQ:witherror} indicate that the large error appear near the boundary.
Table~\ref{tbl:errordependency} shows this fact quantitatively. In the whole domain, the relative $L^2$ error of
reconstructed vector fields from noiseless data and noisy data are $125.5\%$ and $141.9\%$ respectively.
If we measure the errors in triangles whose centers $c=(c_1,c_2)$ locate in $|c| < 0.95$,
errors are $52.5\%$ and $61.2\%$ respectively. The number of triangles are $1{,}552$ which is $88.7\%$ of
total number of triangles ($1{,}750$) in $\Omega$, while the errors are less than half.
\begin{table}
  \caption{Errors measured in subdomains of $\Omega$ for Example~\ref{ex:SQ}}
  \label{tbl:errordependency}
  \centering
  \begin{tabular}{l|cc}
    \toprule
        & (a) Noiseless case         & (b) Random Noise case \\
    \midrule
    \midrule
    $L^2(\Omega)$ & $125.5\%$       & $141.9\%$ \\
    Number of triangles & $1{,}750$ &          \\
    \midrule
    $L^2(|x|<0.95)$ & $52.5\%$      & $61.2\%$ \\
    Number of triangles & $1{,}552$ &          \\
    \bottomrule
  \end{tabular}
\end{table}

\begin{figure}[h]
\begin{minipage}{.3\textwidth}
	\includegraphics
	[width=\textwidth,bb=45 0 300  215,clip]
	{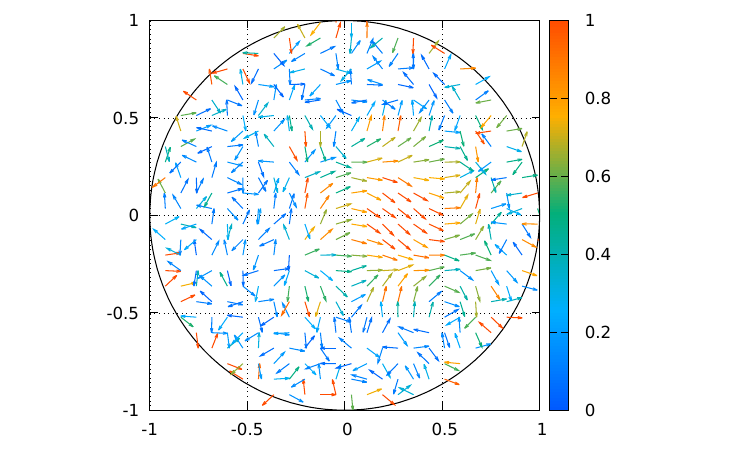}
\end{minipage}
\hfill
\begin{minipage}{.33\textwidth}
	\includegraphics[width=\textwidth,bb=120 45 280 170]{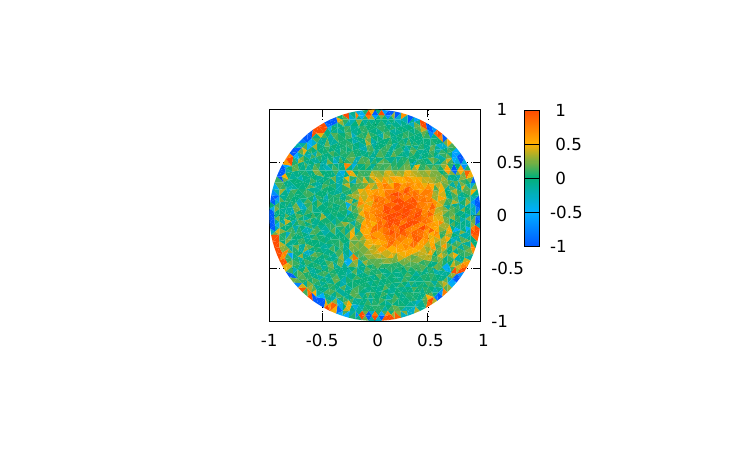}
\end{minipage}
\hfill
\begin{minipage}{.33\textwidth}
	\includegraphics[width=\textwidth,bb=120 45 280 170]{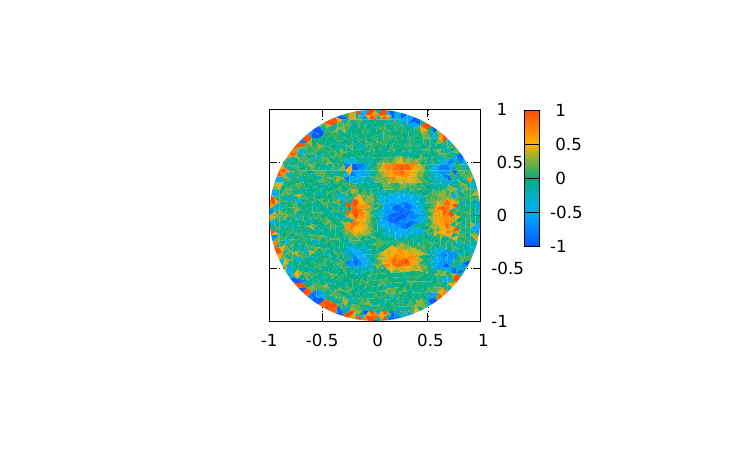}
\end{minipage}
\caption{\label{fig:exampleSQ:witherror}
  Numerical reconstruction from noisy data ($5.52\%$ relative error in $I^0\bbf$ and $4.61\%$  in $I^1\bbf$). 
}
\end{figure}  

\section{conclusion}\label{sec:conclusion}

We reduced the inversion of the  non/attenuated momenta-ray transform of symmetric real valued $m$- tensors to an inverse boundary value problem for a weakly coupled system of transport equations. In the plane, the latter is solved by an extension of Bukhgeim's theory of $A$-analytic maps. The inversion method does not require the Helmholtz decomposition and applies to sufficiently smooth planar tensors of arbitrary order. 
The stability estimates obtained here show that 
our  reconstruction method is as ill-posed as taking $m+1^{st}$ derivatives, where $m$ is the order of the tensor fields.
For the Doppler $(m=1)$ case, the method is feasible as showcased here in results  obtained in its numerical implementation.
For higher order tensors the reconstruction would require either some additional regularization, or transversal information.



\section*{Acknowledgment}
The work of H.~Fujiwara was supported by JSPS KAKENHI Grant Numbers JP20H01821, JP21H00999 and JP22K18674. The work of A.~Tamasan  was supported in part by the National Science Foundation DMS-1907097.
The work of D.~Omogbhe was supported in part by the Austrian Science Fund (FWF), Project  10.55776/P31053 and by FWF SFB 10.55776/F68 ``Tomography Across the Scales'' project F6801-N36.
The work of K.~ Sadiq  was supported in part by the FWF Project  10.55776/P31053.
For open access purposes, the author  has applied a CC BY public copyright license to any author-accepted manuscript version arising from this submission.



	\appendix



\section{Proof of Proposition \ref{prop_transEq}}\label{sec:elementary_results}

	
	From \eqref{TransportEq_u0} and \eqref{TransportEq_uk}, we note that  for $(x,\btheta)\in\OM \times \sph$ and $1 \leq k \leq m$, 
	\begin{align}\label{dt_uzero-f}
		\frac{d}{dt} \left [ e^{-\int_t^\infty a( x+s\btheta)ds} u^0(x+t\btheta, \btheta) \right]&=  e^{-\int_t^\infty a( x+s\btheta)ds}  \langle  \bbf(x+t\btheta), \btheta^m \rangle,  
		\\ \label{dt_uk-uk-1}
		\frac{d}{dt} \left [ e^{-\int_t^\infty a( x+s\btheta)ds} u^k(x+t\btheta, \btheta) \right]&=  e^{-\int_t^\infty a( x+s\btheta)ds}  u^{k-1}(x+t\btheta, \btheta).
	\end{align}
	
	For $(x,\btheta)\in\OM \times \sph$ an integration along the line through $x$ in the direction of $\btheta$ in \eqref{TransportEq_u0} together with the zero incoming condition \eqref{uk_Gam-} yield
	\begin{equation}\label{u0-g0-I0}
		\begin{aligned} 
			e^{-\int_{x \cdot \btheta}^\infty a( \Pi_{\btheta}(x)+s\btheta)ds}	u^{0}(x,\btheta)  &= \int_{-\infty}^{x \cdot \btheta}
			\frac{d}{dt} \left [ e^{-\int_t^\infty a( \Pi_{\btheta}(x)+s\btheta)ds} u^0(\Pi_{\btheta}(x)+t\btheta, \btheta) \right]
			dt \\
			&= \int_{-\infty}^{x \cdot \btheta}  e^{-\int_t^\infty a( \Pi_{\btheta}(x)+s\btheta)ds} \langle  \bbf(\Pi_{\btheta}(x)+t\btheta) , \btheta^m \rangle dt.
		\end{aligned}
	\end{equation}
	Note that $\ds \int_{x \cdot \btheta}^\infty a( \Pi_{\btheta}(x)+s\btheta)ds = \int_{0}^\infty a( x+s\btheta)ds$.
	
	Similarly, for each $1 \leq k \leq m$ a recursive integration by parts  in  \eqref{TransportEq_uk} together with \eqref{uk_Gam-} yield 
	\begin{align*}
		&e^{-\int_{x \cdot \btheta}^\infty a( \Pi_{\btheta}(x)+s\btheta)ds}	u^{k}(x,\btheta) = \int_{-\infty}^{x \cdot \btheta} 	\frac{d}{dt} \left [ e^{-\int_t^\infty a( \Pi_{\btheta}(x)+s\btheta)ds} u^k(\Pi_{\btheta}(x)+t\btheta, \btheta) \right]dt \\
		&= \int_{-\infty}^{x \cdot \btheta}  e^{-\int_t^\infty a( \Pi_{\btheta}(x)+s\btheta)ds}u^{k-1}(\Pi_{\btheta}(x)+t\btheta, \btheta) dt \\
		&=e^{-\int_{x \cdot \btheta}^\infty a( \Pi_{\btheta}(x)+s\btheta)ds} \sum_{n=1}^{k} (-1)^{n-1} \frac{(x \cdot \btheta)^n}{n !} u^{k-n}(x,\btheta)  
		\\
		&\quad+   (-1)^{k}  \int_{-\infty}^{x \cdot \btheta}\frac{t^k}{k ! } e^{-\int_t^\infty a( \Pi_{\btheta}(x)+s\btheta)ds} \langle  \bbf(\Pi_{\btheta}(x)+t\btheta) , \btheta^m \rangle dt,
	\end{align*}where in the last equality we use \eqref{u0-g0-I0}.
	Thus,  multiplying both sides of the above equation with $\ds e^{\int_{x \cdot \btheta}^\infty a( \Pi_{\btheta}(x)+s\btheta)ds}$ 
	yields 
	\begin{equation}\label{uk-gk-Ik}
		\begin{aligned}
			u^{k}(x,\btheta) &= \sum_{n=1}^{k} (-1)^{n-1} \frac{(x \cdot \btheta)^n}{n !} u^{k-n}(x,\btheta)
			+ (-1)^{k} 	 \int_{-\infty}^{x \cdot \btheta}\frac{t^k}{k ! } e^{-\int_t^{x\cdot \btheta} a( \Pi_{\btheta}(x)+s\btheta)ds} \langle  \bbf(\Pi_{\btheta}(x)+t\btheta) , \btheta^m \rangle dt,
		\end{aligned}
	\end{equation}

	Since $\ds \bbf( x + (t - x \cdot \btheta) \btheta) = \bzero$ for every $(x,\btheta)\in\Gam_+$ and $t>x\cdot\btheta$, 
		\begin{align} \nonumber 
			\int_{-\infty}^{x \cdot \btheta}t^k e^{-\int_t^{x \cdot \btheta} a( \Pi_{\btheta}(x)+s\btheta)ds}\langle  \bbf(\Pi_{\btheta}(x)+t\btheta) , \btheta^m \rangle dt &=
			\int_{-\infty}^{\infty}t^k e^{-\int_t^{x \cdot \btheta} a( \Pi_{\btheta}(x)+s\btheta)ds} \langle  \bbf(\Pi_{\btheta}(x)+t\btheta) , \btheta^m \rangle dt  \\ \label{suppf_OM} 
			&=I_a^k\bbf (x, \btheta).
		\end{align}
	The relations \eqref{eq:gk-Ik} now follow from \eqref{uk-gk-Ik}, \eqref{u0-g0-I0}, and \eqref{suppf_OM}. 
	
	Since $\ds \bbf \in H^{s}_0(\mathbf{S}^m; \OM)$, $s \geq 1$ and   $a\in C^{s,\mu}(\ol\OM), \mu >1/2$, the solution $u^0 $ given by \eqref{u0-g0-I0} preserves the regularity and $\ds u^0 \in H^{s}(\Omega\times\sph)$. Moreover, by \eqref{TransportEq_uk} and \eqref{uk-gk-Ik}, $\ds u^k \in H^{s}(\Omega\times\sph), s \geq 1$ for $1\leq k \leq m$.
	\qed

 
\section{An explicit Pompeiu formula for  $L^2$-analytic maps } 
\label{sec:L2map}

In this section  we derive a Pompeiu type formula corresponding to $A$-analytic maps. 
The domain $\OM$ is strictly convex but need not be a disc.

Bukhgeim's original  theory in \cite{bukhgeimBook} shows that solutions (called $L^2$-analytic) of the  
homogenous Beltrami-like equation 
\begin{align}\label{beltrami}
	\dba\bv(z) + L^2 \del\bv(z) = \bzero,\quad z\in \OM,
\end{align}
satisfy a Cauchy-like integral formula,
\begin{align}\label{Analytic}
	\bv (z) = \B [\bv \lvert_{\Gam}](z), \quad  z\in\OM,
\end{align} where 
$\B$ is defined component-wise for $n\geq 0$ by
\begin{equation}\label{BukhgeimCauchyFormula}
	\begin{aligned} 
		(\B \bv)_{-n}(z) &:= \frac{1}{2\pi \i} \int_{\Gam}
		\frac{ v_{-n}(\zeta)}{\zeta-z}d\zeta  
		\\ 
		&\qquad 
		+ \frac{1}{2\pi \i}\int_{\Gam} \left \{ \frac{d\zeta}{\zeta-z}-\frac{d \ol{\zeta}}{\ol{\zeta}-\ol{z}} \right \} \sum_{j=1}^{\infty}  
		v_{-n-2j}(\zeta)
		\left( \frac{\ol{\zeta}-\ol{z}}{\zeta-z} \right) ^{j}, \, z\in\OM.
	\end{aligned}
\end{equation}

For the inhomogeneous Bukhgeim-Beltrami equation 
\begin{align}\label{BukhBeltsimple1}
	\dba\bv +L^2 \del\bv = \bw,
\end{align} 
we introduce here a Pompeiu like operator $\BT$  defined component-wise for $n\geq 0$ by
\begin{align}\label{T_Formula}
	(\BT \bw)_{-n}(z) :=  -\frac{1}{ \pi } \sum_{j=0}^{\infty} \int_{\OM}    w_{-n-2j} (\zeta)  \frac{1}{\zeta-z} \left( \frac{\ol{\zeta}-\ol{z}}{\zeta-z} \right) ^{j}d\xi d\eta, \quad \zeta = \xi +\i \eta, \quad z \in \OM.
\end{align}

	\begin{prop}
		\label{prop_Bukhgeimpompeiu}
Let $\Omega$ be bounded convex domain with $C^1$ boundary,  $\B$ and $\BT$ be the operators in  \eqref{BukhgeimCauchyFormula}, respectively,
\eqref{T_Formula}, and $\bw \in  C(\ol\OM; l^1)$. 
  	 If $\bv \in C^1(\OM; l^1)\cap C(\ol\OM; l^1)$ solves \eqref{BukhBeltsimple1}, then 
	\begin{align} \label{BP_Formula_vk}
		\bv(z) &= 
		\B [\bv \lvert_{\Gam}](z) +	(\BT \bw)(z), 	\quad z \in \OM.
	\end{align}
	\end{prop}

\begin{proof}

	For  $n \geq 0$, consider 
	\begin{align}\label{sigma_n}
		\displaystyle \sigma_{-n}(z,\varphi) = \sum_{j=0}^\infty v_{-n-2j}(z)e^{-\i (n+2j)\varphi},
		\end{align}
	as in \cite{finch}. It is easy to see that  $\sigma_{-n} \in C^1(\Omega)\cap C(\overline{\Omega}).$ 
	
	Let $z \in \Omega$ be arbitrarily fixed.
	  We parametrize  $\ol \OM$ in polar coordinates
	$$\zeta (\varphi) = z+te^{\i \varphi}, \quad 0 \leq t \leq l(\varphi):=\dist(z, \Gam), \quad 0 \leq \phi \leq 2 \pi.$$ Then 
	\begin{align} 		\label{eq:measure}
		e^{-2\i\varphi} = \frac{\overline{\zeta}- \ol{z}}{\zeta -z}, \qquad d\varphi =\frac{1}{2\i}\Bigg(\frac{1}{\zeta-z}d\zeta- \frac{1}{\overline{\zeta}-\overline{z}}d\overline{\zeta} \Bigg).
	\end{align} 

	The  equation \eqref{BukhBeltsimple1} 
written in component-wise form is 
\begin{align} \label{Beltrami_ukEq1}
	&\ol{\del} v_{-n}(z) + \del v_{-n-2}(z) = w_{-n}(z), \quad  z\in\OM, \; n \in \BZ. 
\end{align}
	
	For $n\geq 0,$ evaluate for each $\varphi \in [0, 2 \pi]$
	\begin{align*}
		\displaystyle \sigma_{-n}(\zeta,\varphi)-\sigma_{-n}(z,\varphi) & = \int_{0}^l \frac{\partial \sigma_{-n}}{\partial t}(z + t e^{\i\varphi},\varphi)dt
		= \int_{0}^l \Big(\frac{\partial \sigma_{-n}}{\partial \overline{z}}e^{-\i\varphi} +\frac{\partial \sigma_{-n}}{\partial z}e^{\i\varphi} \Big)dt 
		\\ \numberthis \label{eq:sigma_n}
		& = \int_{0}^l \frac{\partial v_{-n}}{\partial z}e^{-\i(n-1)\varphi}dt +
		\int_{0}^l \sum_{j=0}^\infty
		\Big(\dba v_{-n-2j} + \del v_{-n-2j-2}\Big) e^{-\i(n+2j+1)\varphi} dt.
	\end{align*} 
	
	From \eqref{sigma_n},  $v_{-n}(z)$ is the $n$-th Fourier coefficient of $\sigma_{-n}(z, \cdot)$. Thus, 
		\begin{align*}
			\displaystyle v_{-n}(z) & = \frac{1}{2\pi} \int_{0}^{2\pi}\sigma_{-n}(z,\varphi)e^{\i n\varphi} d \varphi
			\\& = \frac{1}{2\pi} \int_{0}^{2\pi}\sigma_{-n}(\zeta,\varphi)e^{\i n\varphi} d \varphi - \frac{1}{2\pi} \int_{0}^{2\pi} \int_{0}^{l(\varphi)} 
			\del v_{-n} 
			 \frac{1}{te^{-\i\varphi}}t dtd \varphi
			\\&\qquad - \frac{1}{2\pi} \int_{0}^{2\pi} \int_{0}^{l(\varphi)}
			\sum_{j=0}^\infty 
			\Big(\dba v_{-n-2j} + \del v_{-n-2j-2}\Big)
			e^{-2\i j\varphi} \frac{1}{te^{\i\varphi}}tdtd \varphi
			\\ \numberthis \label{eq:BPformula}
			& = \frac{1}{2\pi} \int_{0}^{2\pi}\sum_{j=0}^\infty v_{-n-2j}(\zeta)e^{-2\i j\varphi} d \varphi - \frac{1}{2\pi} \int_{\Omega}
			\del v_{-n}
			\frac{1}{\overline{\zeta}-\overline{z}}d\xi d\eta
			\\&\qquad 
			- \frac{1}{2\pi} \int_{\Omega}
			\sum_{j=0}^\infty  \Big(\dba v_{-n-2j} + \del v_{-n-2j-2}\Big)
			e^{-2\i j\varphi}
			\frac{1}{\zeta- z} \, d\xi d\eta, \quad \zeta = \xi +\i \eta, 
		\end{align*}
	where the second equality uses \eqref{eq:sigma_n}. 	

	By the conjugate form of the Cauchy-Pompeiu formula (e.g. see \cite{vekua_book62}), we have
	\begin{align} 		\label{eq:conjCP}
		\frac{1}{2\pi} \int_{\Omega} 
			\dba v_{-n}
		\frac{1}{\overline{\zeta- z}}d\xi d\eta =\frac{1}{2}v_{-n}(z) + \frac{1}{4\pi \i}\int_{\partial \Omega} v_{-n}(\zeta)\frac{1}{\overline{\zeta}-\overline{z}} d \zeta.
	\end{align}
	Substituting  \eqref{Beltrami_ukEq1}, \eqref{eq:measure}, and \eqref{eq:conjCP} into \eqref{eq:BPformula} yields  
	\begin{equation} \label{eq:u0uk_BPformula1}
		\begin{aligned}
			v_{-n}(z)  &= \frac{1}{2\pi \i} \int_{\Gam}
			\frac{ v_{-n}(\zeta)}{\zeta-z}d\zeta  + \frac{1}{2\pi \i}\int_{\Gam} \left \{ \frac{d\zeta}{\zeta-z}-\frac{d \ol{\zeta}}{\ol{\zeta}-\ol{z}} \right \} \sum_{j=1}^{\infty}  
			v_{-n-2j}(\zeta) 	\left( \frac{\ol{\zeta}-\ol{z}}{\zeta-z} \right) ^{j} \\
			& \qquad  -\frac{1}{ \pi } \sum_{j=0}^{\infty} \int_{\OM}    w_{-n-2j}(\zeta)  \frac{1}{\zeta-z} \left( \frac{\ol{\zeta}-\ol{z}}{\zeta-z} \right) ^{j}d\xi d\eta,  \quad \zeta = \xi +\i \eta.
		\end{aligned}
	\end{equation}

\end{proof}	



\end{document}